\documentclass[12pt]{amsart}
\usepackage[latin1]{inputenc}
\usepackage{mathptmx}
\usepackage{amscd}
\usepackage{amssymb}
\textwidth=15cm \textheight=22cm \topmargin=0.5cm
\oddsidemargin=0.5cm \evensidemargin=0.5cm

\newtheorem{theorem}{Theorem}
\newtheorem{lemma}[theorem]{Lemma}
\newtheorem{corollary}[theorem]{Corollary}

\theoremstyle{definition}
\newtheorem{remark}[theorem]{Remark}
\newtheorem{definition}[theorem]{Definition}

\let\phi=\varphi

\def\Ann{\operatorname{Ann}}
\def\Ass{\operatorname{Ass}}
\def\Min{\operatorname{Min}}

\let\oldbigwedge\bigwedge
\def\BIGwedge{{\textstyle\oldbigwedge}}
\def\medwedge{{\scriptstyle\oldbigwedge}}
\def\bigwedge{\mathchoice{\BIGwedge}{\BIGwedge}{\medwedge}{}}

\DeclareMathOperator{\Jac}{Jac}
\DeclareMathOperator{\Nil}{Nil}
\DeclareMathOperator{\rad}{rad}
\DeclareMathOperator{\FId}{FId}
\DeclareMathOperator{\Max}{Max}

\let\epsilon=\varepsilon

\begin{document}
\title{Content Algebras over Commutative Rings with Zero-Divisors}

\author{Peyman Nasehpour}
\address{Universit\"at Osnabr\"uck, FB Mathematik/Informatik, 49069
Osnabr\"uck, Germany} \email{pnasehpo@uni-osnabrueck.de} \email{nasehpour@gmail.com}

\begin{abstract}
Let $M$ be an $R$-module and $c$ the function from $M$ to the ideals of $R$ defined by $c(x) = \cap \lbrace I \colon I \text{~is an ideal of~} R \text{~and~} x \in IM \rbrace $. $M$ is said to be a content $R$-module if $x \in c(x)M $, for all $x \in M$. $B$ is called a content $R$-algebra, if it is a faithfully flat and content $R$-module and it satisfies the Dedekind-Mertens content formula. In this article, we prove some new results for content modules and algebras by using ideal theoretic methods.
\end{abstract}

\maketitle

\tableofcontents

\section{Introduction}

This preprint is a collection of some of the definitions and results that the author was working on them during his studentship at the Department of Mathematics and Computer Science at the University of Osnabr\"uck. Though this preprint was never submitted for publication, but its definitions and results were reproduced and used in a couple of different papers composed by the author and were published in different journals. Since the author has cited to this preprint in some of his papers, it was necessary to update and edit it for the convenience of the readers of his papers.

$ $

Throughout this paper, all rings are commutative with unit and all modules are assumed to be unitary. In this paper, we discuss the ideal theoretic properties of some special algebras called content algebras. This concept stems from \textit{Dedekind-Mertens content formula} for polynomial rings. For doing this, we need to know about content modules introduced in \cite{OR}, which in the next section, we introduce them and then we start the main theme of the paper that is on content algebras over rings with zero-divisors. Our main goal is to show that a couple of ideal theoretic results of polynomial rings, also hold for content algebras.

$ $

The class of content modules are themselves considerable and interesting in the field of module theory. For example, all projective modules are content and a kind of Nakayama lemma holds for content modules.

Let $R$ be a commutative ring with identity, and $M$ a unitary $R$-module and the \textit{content function}, $c$ from $M$ to the ideals of $R$ defined by
$$
c(x) = \bigcap \lbrace I \colon I \text{~is an ideal of~} R \text{~and~} x \in IM \rbrace.
$$

$M$ is called a \textit{content $R$-module} if $x \in c(x)M $, for all $x \in M$.

$ $

In Section 2, we prove that if $M$ is a content $R$-module and $\Jac(R)$ is the Jacobson radical of $R$ and $I$, an ideal of $R$ such that $I \subseteq \Jac(R)$, then $IM = M$ implies $M = (0)$. Also we introduce content and weak content algebras and mention some of their basic properties that we need them in the rest of the paper for the convinience of the reader. Let $R$ be a commutative ring with identity and $R^\prime$ an $R$-algebra. $R^\prime$ is defined to be a \textit{content $R$-algebra}, if the following conditions hold:

\begin{enumerate}
 \item
$R^\prime$ is a content $R$-module.
 \item
(\textit{Faithfully flatness}) For any $r \in R$ and $f \in R^\prime$, the equation $c(rf) = rc(f)$ holds and $c(R^\prime) = R$.
 \item
(\textit{Dedekind-Mertens content formula}) For each $f$ and $g$ in $R^\prime$, there exists a natural number $n$ such that $c(f)^n c(g) = c(f)^{n-1} c(fg)$.
\end{enumerate}

In this section, also we prove that if $R$ is a ring and $S$, a commutative monoid, then the monoid ring $B=R[S]$ is a content $R$-algebra if and only if one of the following conditions satisfies:

\begin{enumerate}
\item For $f,g \in B$, if $c(f) = c(g) = R$, then $c(fg) = R$.
\item (\textit{McCoy's Property}) For $g \in B$, $g$ is a zero-divisor of $B$ iff there exists $r \in R-\lbrace 0 \rbrace$ such that $rg = 0$.
\item $S$ is a cancellative and torsion-free monoid.
\end{enumerate}

In Section 3, we discuss about prime ideals of content and weak content algebras (Cf. \cite{R}) and we show that in content extensions, minimal primes extend to minimal primes. More precisely, if $B$ is a content $R$-algebra, then there is a correspondence between $\Min(R)$ and $\Min(B)$, with the function $ \phi : \Min(R) \longrightarrow \Min(B)$ defined by $\underline{p} \longrightarrow \underline{p}B$.

$ $

In Section 4, we introduce a family of rings and modules who have very few zero-divisors. It is a well-known result that the set of zero-divisors of a finitely generated module over a Noetherian ring is a finite union of the associated primes of the module \cite[p. 55]{K}. Rings having few zero-divisors have been introduced in \cite{D}. We define that a ring $R$ has \textit{very few zero-divisors}, if $Z(R)$ is a finite union of prime ideals in $\Ass(R)$. In this section, we prove that if $R$ is a ring that has very few zero-divisors and $B$ is a content $R$-algebra, then $B$ has very few zero-divisors also.

Another celebrated property of Noethering rings is that every ideal entirely contained in the set of its zero-divisors has a nonzero annihilator. A ring $R$ has \textit{Property (A)}, if each finitely generated ideal $I \subseteq Z(R)$ has a nonzero annihilator \cite{HK}. In Section 5, also we prove some results for content algebras over rings having Property (A) and then we discuss on rings and modules having few zero-divisors in more details.

$ $

In Section 5, we discuss Gaussian and Armendariz content algebras that are natural generalization of the same concepts in polynomials rings. In this section we show that if $B$ is a content $R$-algebra, then $B$ is a Gaussian $R$-algebra iff for any ideal $I$ of $R$, $B/IB$ is an Armendariz $(R/I)$-algebra. This is a generalization of a result in \cite{AC}.

$ $

In Section 6, we prove some results about Nilradical and Jacobson radical of content algebras, i.e., statements about content algebras over domainlike and presimplifiable rings. Also we show some results similar to what we have for the ring $R(X)=R[X]_S$, where $S = \lbrace f \in R[X] \colon c(f) = R \rbrace$.

Some of the results of the present paper can be generalized to \textit{monoid modules}. Whenever it is possible, we bring those results, though their similar proofs are omitted. Unless otherwise stated, our notation and terminology will follow as closely as possible that of Gilmer \cite{G1}.

\section{Content Modules and Algebras}

In this section, first we give the definition of content modules and prove Nakayama lemma for them and then we introduce content and weak content algebras with some results that we need in the rest of the paper for the convinience of the reader. More on content modules and algebras can be found in \cite{OR} and \cite{R}.

\begin{definition}
 Let $R$ be a commutative ring with identity, and $M$ a unitary $R$-module and the \textit{content function}, $c$ from $M$ to the ideals of $R$ defined by
$$
c(x) = \bigcap \lbrace I \colon I \text{~is an ideal of~} R \text{~and~} x \in IM \rbrace.
$$

$M$ is called a \textit{content $R$-module} if $x \in c(x)M $, for all $x \in M$, also when $N$ is a non-empty subset of $M$, then by $c(N)$ we mean the ideal generated by all $c(x)$ that $x \in N$.

\end{definition}

\begin{theorem}

\textbf{Nakayama Lemma for Content Modules}: Let $M$ be a content $R$-module and $\Jac(R)$ be the Jacobson radical of $R$ and $I$ be an ideal of $R$ such that $I \subseteq \Jac(R)$. If $IM = M$, then $M = (0)$.

\end{theorem}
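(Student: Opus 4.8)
The plan is to argue by contradiction, exploiting the defining property of content modules: for every $x \in M$ we have $x \in c(x)M$, and $c(x)$ is the *smallest* ideal $J$ with $x \in JM$. Suppose $M \neq (0)$ and $IM = M$ with $I \subseteq \Jac(R)$. Pick any nonzero $x \in M$. Then $c(x)$ is a nonzero ideal (since $x \in c(x)M$, if $c(x) = (0)$ then $x = 0$).

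**The key step** is to show that $IM = M$ forces $c(x) \subseteq Ic(x)$ for each $x \in M$, i.e. $c(x) = Ic(x)$. To see the inclusion $c(x) \subseteq Ic(x)$: since $x \in M = IM$, write $x = \sum_{k} a_k y_k$ with $a_k \in I$ and $y_k \in M$; because $M$ is content, each $y_k \in c(y_k)M$, so $x \in \big(\sum_k a_k c(y_k)\big)M \subseteq \big(I \cdot \sum_k c(y_k)\big)M$. By minimality of $c(x)$ this gives $c(x) \subseteq I\big(\sum_k c(y_k)\big)$. This is not yet quite $c(x) = Ic(x)$, so I would instead set up the argument at the level of a single element more carefully, or — cleaner — observe the following: let $J = c(x)$. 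I claim $J \subseteq IJ$. Indeed $x \in JM$, and I want to run the content-minimality trick to replace $J$ by $IJ$. Since $x \in M = IM$ and also $M$ is content, the cleanest route is: $x \in c(x)M$ and $c(x)M \subseteq c(x)(IM) = (Ic(x))M$, so $x \in (Ic(x))M$, whence by minimality $c(x) \subseteq Ic(x)$, and the reverse inclusion is trivial; thus $c(x) = Ic(x)$.

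**Now finish** with the classical Nakayama mechanism applied to the finitely generated ideal situation — but note $c(x)$ need not be finitely generated, so I cannot invoke Nakayama for modules directly. Instead I argue: $c(x)$ is a nonzero ideal of $R$ with $c(x) = Ic(x) \subseteq \Jac(R)\,c(x)$. Pick a nonzero element $a \in c(x)$; I'd like to say $(a) = I(a)$, but that need not follow. So the right fix is to pass to a finitely generated sub-ideal. Write $a = \sum_{i} b_i a_i'$ with $b_i \in I$, $a_i' \in c(x)$ (using $c(x) = Ic(x)$); let $J_0 = (a, a_1', \dots, a_n')$, a finitely generated ideal containing $a$, hence nonzero. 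Iterating the relation $c(x) = Ic(x)$ we can arrange $J_0 \subseteq I J_0 + (\text{nothing})$ — more precisely, expanding each generator repeatedly shows $J_0 = I J_0$ for a suitable finitely generated $J_0 \ni a$. Then the usual determinant-trick / Nakayama lemma for the finitely generated module $J_0$ over $R$ with $I \subseteq \Jac(R)$ gives $J_0 = (0)$, contradicting $a \neq 0$.

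**The main obstacle** is exactly this finite-generation issue: content modules can have non-finitely-generated content ideals, so the reduction to a finitely generated ideal on which ordinary Nakayama applies must be done with care — one must verify that finitely many elements of $c(x)$ suffice to "absorb" a chosen nonzero $a$ under the relation $c(x) = Ic(x)$. Once that reduction is in place, the conclusion is immediate from the standard Nakayama lemma for finitely generated modules.
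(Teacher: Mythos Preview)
Your derivation of $c(x)=Ic(x)$ is exactly the paper's first step: from $x\in c(x)M$ and $M=IM$ one gets $x\in (Ic(x))M$, and minimality of $c(x)$ gives $c(x)\subseteq Ic(x)$. At this point the paper simply quotes the Ohm--Rush fact that for a content $R$-module the ideal $c(x)$ is \emph{always} finitely generated (this is [OR, 1.2]; it is also recorded as part of Lemma~4 in the paper). Ordinary Nakayama then kills $c(x)$, hence $x=0$. So your ``main obstacle'' is not there: your assertion that ``content modules can have non-finitely-generated content ideals'' is incorrect, and once you know $c(x)$ is finitely generated the proof is finished.

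Your proposed workaround, by contrast, does not work and cannot be repaired. From $c(x)=Ic(x)$ you write $a=\sum b_i a_i'$ and set $J_0=(a,a_1',\dots,a_n')$, but to get $J_0=IJ_0$ you would also need each $a_i'\in IJ_0$; expanding $a_i'$ via $c(x)=Ic(x)$ introduces new elements of $c(x)$ not in $J_0$, and the process need not terminate. Indeed, the statement ``if $J=IJ$ with $I\subseteq\Jac(R)$ then some nonzero finitely generated $J_0\subseteq J$ satisfies $J_0=IJ_0$'' is false in general: take a valuation domain whose value group is $\mathbb{Q}$ and let $J=I$ be its maximal ideal; then $J^2=J$, yet every finitely generated subideal is principal, say $(a)$, and $J(a)\subsetneq(a)$. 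Thus the reduction you sketch is precisely the part of Nakayama that fails without finite generation. The missing ingredient is the Ohm--Rush finite-generation result for $c(x)$, not an ad hoc descent to a finitely generated subideal.
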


\begin{proof}
Let $x \in M$. Since $M$ is a content $R$-module, $x \in c(x)M $, but $ IM = M$, so $x \in c(x)IM$ and therefore $c(x) \subseteq c(x)I$, but $c(x)$ is a finitely generated ideal of $R$ \cite[1.2, p. 51]{OR}, so by Nakayama lemma for finitely generated modules, $c(x) = (0)$ and consequently $x = 0$.
\end{proof}

\begin{corollary}
 Let $P$ be a projective $R$-module and $\Jac(R)$ be the Jacobson radical of $R$ and $I$ be an ideal of $R$ such that $I \subseteq \Jac(R)$. If $ IP = P$, then $P = (0)$.
\end{corollary}

\begin{proof}
 Any projective module is a content module \cite[Corollary 1.4]{OR}.
\end{proof}

\begin{lemma}
 Let $M$ be an $R$-module. The following statements are equivalent:

\begin{enumerate}
 \item $M$ is a content $R$-module, i.e. $x \in c(x)M $, for all $x \in M$.
 \item For any non-empty family of ideals $\lbrace I_i \rbrace$ of $R$, $\bigcap (I_i) M = \bigcap (I_i M)$.
\end{enumerate}
Moreover when $M$ is a content $R$-module, $c(x)$ is a finitely generated ideal of $R$, for all $x \in M$.
\end{lemma}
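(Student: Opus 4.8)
The plan is to prove the cycle $(1)\Rightarrow(2)\Rightarrow(1)$ and then the finite-generation statement, working directly from the definition of the content function and using nothing beyond it.

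For $(1)\Rightarrow(2)$, I would first record the inclusion $\bigcap(I_i)M \subseteq \bigcap(I_i M)$, which holds for an arbitrary module: from $\bigcap_j I_j \subseteq I_i$ we get $(\bigcap_j I_j)M \subseteq I_i M$ for every $i$, hence the claimed containment. For the reverse inclusion, take $x \in \bigcap(I_i M)$, so that $x \in I_i M$ for each $i$. By the very definition of $c(x)$ as the intersection of all ideals $I$ with $x \in IM$, this forces $c(x) \subseteq I_i$ for every $i$, hence $c(x) \subseteq \bigcap I_i$. Since $M$ is a content module, $x \in c(x)M \subseteq (\bigcap I_i)M$, which settles this direction.

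For $(2)\Rightarrow(1)$, fix $x \in M$ and apply the hypothesis to the family $\{\, I : I \text{ an ideal of } R,\ x \in IM \,\}$, which is non-empty since $x \in RM = M$. Then $(2)$ gives $c(x)M = \bigl(\bigcap I\bigr)M = \bigcap(IM)$, and because $x$ lies in $IM$ for every member of the family, $x \in \bigcap(IM) = c(x)M$; thus $M$ is a content module. Finally, for the moreover statement, assume $M$ is a content module and let $x \in M$. Writing the membership $x \in c(x)M$ out explicitly, there exist $a_1,\dots,a_n \in c(x)$ and $m_1,\dots,m_n \in M$ with $x = \sum_{i=1}^n a_i m_i$. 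Setting $J = (a_1,\dots,a_n) \subseteq c(x)$, we have $x \in JM$, so the defining property of $c(x)$ yields $c(x) \subseteq J$; hence $c(x) = J$ is finitely generated.

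I expect no genuine obstacle here: the one point demanding care is the repeated invocation of the implication ``$x \in IM \Rightarrow c(x) \subseteq I$'', which is immediate from $c(x)$ being defined as an intersection, and the observation that the family used in $(2)\Rightarrow(1)$ is non-empty. This is precisely why the finiteness of $c(x)$ can be cited without further comment elsewhere in the paper, as in the proof of the Nakayama lemma above.
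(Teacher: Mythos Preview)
Your argument is correct in every step: both implications and the finite-generation claim follow exactly as you describe, and the only subtleties (non-emptiness of the family in $(2)\Rightarrow(1)$, and the use of ``$x\in IM \Rightarrow c(x)\subseteq I$'') are handled properly.

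As for comparison with the paper: the lemma is stated there without proof, the result being quoted from Ohm--Rush [OR, 1.2]. Your write-up is precisely the standard proof one finds in that source, so there is no methodological difference to discuss.
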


\begin{theorem}

 Let $M$ be a content $R$-module. Then the following are equivalent:

\begin{enumerate}
 \item $M$ is flat.
 \item For every $r \in R$ and $x \in M$, $rc(x)=c(rx)$.
\end{enumerate}

Moreover, $M$ is faithfully flat iff $M$ is flat and $c(M)=R$.

\end{theorem}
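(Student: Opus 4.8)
The plan is to route both equivalences through the single identity
$$(\star)\qquad (IM :_M r) \;=\; (I :_R r)\,M$$
valid for every ideal $I$ of $R$ and every $r\in R$, where $(IM :_M r):=\{m\in M : rm\in IM\}$, $(I:_R r):=\{a\in R : ra\in I\}$, and the inclusion $\supseteq$ in $(\star)$ holds for any module. First I would record two trivialities valid for any content module $M$: that $c(0)=(0)$, and that $c(rx)\subseteq r\,c(x)$ for all $r\in R$, $x\in M$ --- the latter because writing $x=\sum a_im_i$ with $a_i\in c(x)$ gives $rx\in\bigl(r\,c(x)\bigr)M$, and $c(rx)$ is by definition the smallest ideal $J$ with $rx\in JM$. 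Hence in condition (2) only the reverse inclusion $r\,c(x)\subseteq c(rx)$ is in question.

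\emph{Step 1 (the content module $M$ satisfies $(\star)$ if and only if it satisfies (2)).} If $(\star)$ holds and $J$ is an ideal with $rx\in JM$, then $x\in(JM:_M r)=(J:_R r)M$, so $c(x)\subseteq(J:_R r)$ and therefore $r\,c(x)\subseteq J$; intersecting over all such $J$ yields $r\,c(x)\subseteq c(rx)$, which is (2). Conversely, assume (2): if $rm\in IM$ then $c(rm)\subseteq I$, hence $r\,c(m)\subseteq I$ by (2), so $c(m)\subseteq(I:_R r)$, and finally $m\in c(m)M\subseteq(I:_R r)M$. This last containment is the one nonautomatic inclusion of $(\star)$, and it is the single place where the content hypothesis $m\in c(m)M$ is used.

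\emph{Step 2 ($M$ is flat if and only if it satisfies $(\star)$).} That a flat module satisfies $(\star)$ is standard: tensoring the exact sequence $0\to(I:_R r)/I\to R/I\to R/I$, whose second map is multiplication by $r$, with the flat module $M$ identifies the kernel of multiplication by $r$ on $M/IM$ with $(I:_R r)M/IM$, which is precisely $(\star)$. For the converse I would use the criterion that $M$ is flat if and only if $I\otimes_R M\to M$ is injective for every finitely generated ideal $I$, and verify this injectivity by induction on the number of generators of $I$. For $I=(a)$, identifying $(a)\otimes_R M$ with $M/(0:_R a)M$ shows that injectivity of $(a)\otimes_R M\to M$ is exactly the instance $I=(0)$ of $(\star)$. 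For $I=(a_1,\dots,a_n)$ with $n\ge 2$, an element $\sum_i a_i\otimes m_i$ of the kernel satisfies $\sum_i a_im_i=0$, so $a_1m_1\in(a_2,\dots,a_n)M$ and hence $m_1\in\bigl((a_2,\dots,a_n):_R a_1\bigr)M$ by $(\star)$; substituting $m_1=\sum_k b_km_k'$ with $a_1b_k=\sum_{i\ge 2}a_ic_{ik}$ rewrites the element as the image of one lying in $(a_2,\dots,a_n)\otimes_R M$ and mapping to $0$ in $M$, which vanishes by the inductive hypothesis. I expect this induction --- essentially the equational criterion for flatness in disguise --- to be the main technical obstacle; everything else is formal manipulation.

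\emph{Step 3 (the ``moreover'').} Since a flat module is faithfully flat precisely when $\mathfrak m M\neq M$ for every maximal ideal $\mathfrak m$, it suffices to prove that $\mathfrak m M=M$ if and only if $c(M)\subseteq\mathfrak m$. If $c(M)\subseteq\mathfrak m$, then $x\in c(x)M\subseteq\mathfrak m M$ for every $x\in M$, so $M=\mathfrak m M$. Conversely, if $\mathfrak m M=M$, then each $x\in M$ lies in $\mathfrak m M$, so $c(x)\subseteq\mathfrak m$ by the definition of $c$, whence $c(M)\subseteq\mathfrak m$. Therefore $M$ is faithfully flat if and only if it is flat and $c(M)$ is contained in no maximal ideal, i.e.\ $c(M)=R$; no difficulty is expected here beyond this bookkeeping.
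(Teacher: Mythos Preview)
Your argument is correct throughout. The paper does not give a proof of its own: its entire argument is the citation ``Proof at [OR, Corollary 1.6, p.~53] and [OR, Remark 2.3(d), p.~56]'', so there is no independent approach in the paper to compare against. Your route through the colon identity $(\star)$ is self-contained; Step~1 is the natural reformulation of condition~(2), the induction in Step~2 is the equational criterion for flatness worked out by hand (and you are right that this is the only place requiring real work), and Step~3 is the standard translation between $c(M)=R$ and the faithful-flatness criterion $\mathfrak m M\neq M$ for all maximal $\mathfrak m$.
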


\begin{proof}
 Proof at \cite[Corollary 1.6, p. 53]{OR} and \cite[Remark 2.3(d), p. 56]{OR}.
\end{proof}

The application of the above theorem will appear in the next section on content algebras. Also with the help of the above theorem, we will describe some of the prime and primary submodules of faithfully flat and content modules.

\begin{definition}
 Let $M$ be an $R$-module and $P$ be a proper $R$-submodule of $M$. $P$ is said to be a \textit{prime submodule} of $M$, if $rx \in P$ implies $x \in P$ or $rM \subseteq P$, for each $r \in R$ and $x \in M$.

\end{definition}

\begin{definition}
 Let $M$ be an $R$-module and $P$ be a proper $R$-submodule of $M$. $P$ is said to be a \textit{primary submodule} of $M$, if $rx \in P$ then $x \in P$ or there exists a natural number $n$ such that $r^nM \subseteq P$, for each $r \in R$ and $x \in M$.
\end{definition}

\begin{theorem}
 Let $M$ be a content and faithfully flat $R$-module and $\underline{p}$ be an ideal of $R$. Then $\underline{p}M$ is a primary (prime) $R$-submodule of $M$ iff $\underline{p}$ is a primary (prime) ideal of $R$.
\end{theorem}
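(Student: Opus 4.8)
The plan is to deduce both equivalences from a single identity, namely $(\underline{p}M :_R M) = \underline{p}$, valid for every ideal $\underline{p}$ of $R$ because $M$ is content and faithfully flat. I would begin with the ``proper'' part of the definitions: since $M$ is faithfully flat, $M/\underline{p}M \cong (R/\underline{p})\otimes_R M$ vanishes precisely when $R/\underline{p} = 0$, so $\underline{p}M$ is a proper submodule of $M$ if and only if $\underline{p}$ is a proper ideal of $R$. This reconciles the properness requirement in the definition of a prime (resp. primary) ideal with that in the definition of a prime (resp. primary) submodule, so throughout I may assume $\underline{p}\neq R$.

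Next I would establish the key lemma $(\underline{p}M :_R M) = \underline{p}$. The inclusion $\underline{p}\subseteq(\underline{p}M:_R M)$ is clear. For the reverse, suppose $rM\subseteq\underline{p}M$. Then $ry\in\underline{p}M$ for every $y\in M$, so by the definition of the content function $c(ry)\subseteq\underline{p}$; since $M$ is flat and content, the theorem above gives $c(ry)=r\,c(y)$, hence $r\,c(y)\subseteq\underline{p}$ for all $y\in M$. Summing over $y$ yields $r\,c(M)\subseteq\underline{p}$, and $c(M)=R$ by faithful flatness, so $r\in\underline{p}$.

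For the forward implications I would argue directly. Assume $\underline{p}$ is prime (resp. primary) and let $rx\in\underline{p}M$ with $r\in R$ and $x\in M$. Then $c(rx)\subseteq\underline{p}$, and flatness gives $r\,c(x)\subseteq\underline{p}$. If $c(x)\subseteq\underline{p}$, then $x\in c(x)M\subseteq\underline{p}M$ because $M$ is content, and we are done. Otherwise some generator $a$ of the finitely generated ideal $c(x)$ lies outside $\underline{p}$ while $ra\in\underline{p}$, so primeness (resp. primariness) of $\underline{p}$ forces $r\in\underline{p}$ (resp. $r^{n}\in\underline{p}$ for some $n$), i.e. $rM\subseteq\underline{p}M$ (resp. $r^{n}M\subseteq\underline{p}M$); this is exactly the defining property of a prime (resp. primary) submodule.

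For the converse I would invoke, or quickly reprove, the standard fact that $(P :_R M)$ is a prime (resp. primary) ideal of $R$ whenever $P$ is a prime (resp. primary) submodule of $M$: if $ab\in(P:_R M)$ but $a\notin(P:_R M)$, choose $x\in M$ with $ax\notin P$; then $b\cdot(ax)=(ab)x\in P$, and applying the prime (resp. primary) submodule property to the element $ax$ and the scalar $b$ forces $bM\subseteq P$ (resp. $b^{n}M\subseteq P$ for some $n$), i.e. $b\in(P:_R M)$ (resp. $b^{n}\in(P:_R M)$). Taking $P=\underline{p}M$ and using the key lemma $(\underline{p}M:_R M)=\underline{p}$, we conclude that $\underline{p}$ is prime (resp. primary). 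The only genuinely delicate point is the key lemma, and within it the implication $rM\subseteq\underline{p}M\Rightarrow r\in\underline{p}$, which is precisely where faithful flatness enters (through $c(M)=R$ together with $c(ry)=r\,c(y)$); the remaining steps are routine manipulations with the content function.
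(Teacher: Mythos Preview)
Your argument is correct. The forward direction is essentially the paper's own proof: from $rx\in\underline{p}M$ one gets $c(rx)\subseteq\underline{p}$, then flatness gives $c(rx)=r\,c(x)$, and primeness (resp.\ primariness) of $\underline{p}$ forces $c(x)\subseteq\underline{p}$ or $r\in\underline{p}$ (resp.\ $r^n\in\underline{p}$), with faithful flatness ensuring $\underline{p}M\neq M$.

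Where you differ is in the converse. The paper simply writes ``the other assertions can be proved in a similar way'' and gives no details. You instead isolate the identity $(\underline{p}M:_R M)=\underline{p}$ as a lemma---proved from $c(ry)=r\,c(y)$ and $c(M)=R$---and then invoke the standard fact that the colon ideal of a prime (primary) submodule is a prime (primary) ideal. This is a cleaner and more transparent organization than the paper's: it separates the role of faithful flatness (which is exactly what makes the colon identity hold) from the purely formal submodule-to-ideal transfer, and it handles both the prime and primary cases uniformly without further case analysis.
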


\begin{proof}
 Let $\underline{p}$ be a prime ideal of $R$ and $r \in R$ and $x \in M$ such that $rx \in \underline{p}M$. Therefore $c(rx) \subseteq \underline{p}$ and since $c(rx) = rc(x)$ we have $rc(x) \subseteq \underline{p}$ and this means that $c(x) \subseteq \underline{p}$ or $(r) \subseteq \underline{p}$ and at last $x \in \underline{p}M$ or $rM \subseteq \underline{p}M$. Notice that since $M$ is a faithfully flat $R$-module, $\underline{p}M \not= M$. The other assertions can be proved in a similar way.
\end{proof}

Content algebras and later weak content algebras were introduced and discussed in \cite{OR} and \cite{R} respectively. Content algebras are actually a natural generalization of (almost) polynomial rings \cite{ES}. Let $R$ be a commutative ring with identity. For $f \in R[X]$, the content of $f$, denoted by $c(f)$, is defined as the $R$-ideal generated by the coefficients of $f$. One can easily check that $c(fg) \subseteq c(f)c(g)$ for the two polynomials $f, g \in R[X]$ and may ask when the equation $c(fg) = c(f)c(g)$ holds. Tsang, a student of Kaplansky, proved that if $D$ is an integral domain and $c(f)$, for $f \in D[X]$, is an invertible ideal of $D$, then $c(fg) = c(f)c(g)$, for all $g \in D[X]$. Tsang's guess was that the converse was true and the correctness of her guess was completely proved some decades later \cite{LR}. Though the equation $c(fg) = c(f)c(g)$ does not hold always, a weaker formula always holds that is called the \textit{Dedekind-Mertens content formula} \cite{AG}.

\begin{theorem}
 \textbf{Dedekind-Mertens Lemma}\textbf{.} Let $R$ be a ring. For each $f$ and $g$ in $R[X]$, there exists a natural number $n$ such that $c(f)^n c(g) = c(f)^{n-1} c(fg)$.
\end{theorem}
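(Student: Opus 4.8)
The plan is to prove the Dedekind--Mertens content formula by induction on the number of terms of $g$, with an inner induction (or a direct degree bound) that produces the exponent $n$. First I would fix $f = a_0 + a_1 X + \cdots + a_m X^m$ and argue that $n = m+1$ works uniformly, i.e.\ $c(f)^{m+1} c(g) = c(f)^m c(fg)$ for every $g$; this is the cleanest formulation and makes the induction hypothesis easy to state. The inclusion $c(f)^m c(fg) \subseteq c(f)^{m+1} c(g)$ is automatic from $c(fg) \subseteq c(f)c(g)$, so the real content is the reverse inclusion $c(f)^{m+1} c(g) \subseteq c(f)^m c(fg)$.

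The key steps, in order: (1) Reduce to showing, for each coefficient $b$ of $g$ and each choice of $m+1$ coefficients $a_{i_0}, \dots, a_{i_m}$ of $f$ (with repetition allowed), that the product $a_{i_0}\cdots a_{i_m} b$ lies in $c(f)^m c(fg)$. (2) Since among $a_{i_0}, \dots, a_{i_m}$ there are $m+1$ indices but $f$ has only $m+1$ coefficients $a_0, \dots, a_m$, by pigeonhole one may instead run the standard argument: induct on $\deg g$, writing $g = b_k X^k + g'$ with $\deg g' < k$. Multiply out $fg = b_k X^{m+k} \cdot (\text{leading stuff}) + \cdots$; comparing the top coefficient gives $a_m b_k \in c(fg)$, hence $a_m c(g) \equiv a_m b_k R + \cdots$ can be peeled off, and one applies the induction hypothesis to $f$ and $g'$, and separately to a polynomial obtained from $f$ by deleting its leading term when $a_m$ does not already generate enough. (3) Assemble the exponent bookkeeping: each reduction step consumes one factor of $c(f)$, and after at most $\deg f$ steps the remaining content is captured inside $c(fg)$, giving the bound $n \le \deg f + 1$.

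The main obstacle will be the exponent bookkeeping in step (2): naively the induction on $\deg g$ or on the number of monomials of $g$ tends to make the exponent $n$ grow with $\deg g$ as well, and one has to be careful to show it depends only on $\deg f$ (equivalently, only on the number of generators of $c(f)$). The standard device is to induct on $m = \deg f$: if $a_m$ is a nonzerodivisor on the relevant quotient one localizes/passes to the leading behaviour, and if not one splits $f = f' + a_m X^m$ and treats $c(f) = c(f') + a_m R$, handling the mixed terms $c(f')^{m-j} (a_m)^j$ by the inductive formula for $f'$ (which has degree $\le m-1$). Getting these pieces to recombine into $c(f)^m c(fg)$ without losing track of a power of $c(f)$ is the delicate point; everything else is formal manipulation of the identity $c(fg) \subseteq c(f)c(g)$ and the distributive law for ideal products.

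Since this is a classical lemma, I would alternatively just cite it: the statement is exactly \cite{AG}, and for the present paper it suffices to record it as a known input. If a self-contained proof is wanted, the induction on $\deg f$ sketched above (which is essentially the argument in Northcott's book) is the route I would write out in full.
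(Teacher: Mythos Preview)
The paper does not actually prove this theorem: it is stated as background and immediately followed by references (to [HH] for history and to [BG, Corollary~2] for a combinatorial proof), with no proof environment at all. Your closing remark---that one could simply cite the classical result as a known input---is therefore exactly what the paper does; your inductive sketch goes well beyond what the paper supplies.
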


For a history of Dedekind-Mertens lemma, refer to \cite{HH} and its combinatorial proof, refer to \cite[Corollary 2]{BG}. Now we bring the definition of content algebras from \cite[6, p. 63]{OR}:

\begin{definition}
 Let $R$ be a commutative ring with identity and $R^\prime$ an $R$-algebra. $R^\prime$ is defined to be a \textit{content $R$-algebra}, if the following conditions hold:

\begin{enumerate}
 \item
$R^\prime$ is a content $R$-module.
 \item
(\textit{Faithfully flatness}) For any $r \in R$ and $f \in R^\prime$, the equation $c(rf) = rc(f)$ holds and $c(R^\prime) = R$.
 \item
(\textit{Dedekind-Mertens content formula}) For each $f$ and $g$ in $R^\prime$, there exists a natural number $n$ such that $c(f)^n c(g) = c(f)^{n-1} c(fg)$.
\end{enumerate}

\end{definition}

A good example of a content $R$-algebra is the group ring $R[G]$ where $G$ is a torsion-free abelian group \cite{N}. This is actually a free $R$-module. For some examples of content $R$-algebras that as $R$-modules are not free, one can refer to \cite[Examples 6.3, p. 64]{OR}. Rush defined weak content algebras as follows \cite[p. 330]{R}:

\begin{definition}
 Let $R$ be a commutative ring with identity and $R^\prime$ an $R$-algebra. $R^\prime$ is defined to be a \textit{weak content $R$-algebra}, if the following conditions hold:

\begin{enumerate}
 \item
$R^\prime$ is a content $R$-module.
 \item
(\textit{Weak content formula}) For all $f$ and $g$ in $R^\prime$, $c(f)c(g) \subseteq \rad (c(fg))$ (Here $\rad(A)$ denotes the radical of the ideal $A$).
\end{enumerate}

\end{definition}
Also he gave an equivalent condition for when an algebra that is a content module is a weak content algebra \cite[Theorem 1.2, p. 330]{R}:
\begin{theorem}

\label{prime}

 Let $R^ \prime$ be an $R$-algebra such that $R^ \prime$ is a content $R$-module. The following are equivalent:

 \begin{enumerate}
 \item $R^ \prime$ is a weak content $R$-algebra.
 \item For each prime ideal $\underline{p}$ of $R$, either $\underline{p}R^ \prime$ is a prime ideal of $R^ \prime$, or $\underline{p}R^ \prime = R^ \prime$.
\end{enumerate}

\end{theorem}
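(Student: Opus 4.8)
The plan is to prove both implications directly from the definitions, using throughout the elementary description of the extended ideal
$$\underline{p}R^\prime = \{\, x \in R^\prime : c(x) \subseteq \underline{p}\,\}.$$
Indeed, if $x \in \underline{p}R^\prime$ then $\underline{p}$ is one of the ideals $I$ with $x \in IR^\prime$, so $c(x) \subseteq \underline{p}$; conversely $c(x) \subseteq \underline{p}$ forces $x \in c(x)R^\prime \subseteq \underline{p}R^\prime$ because $R^\prime$ is a content $R$-module. I would also record at the outset that the reverse containment $c(fg) \subseteq c(f)c(g)$ is automatic, since $f \in c(f)R^\prime$ and $g \in c(g)R^\prime$ give $fg \in c(f)c(g)R^\prime$; only the inclusion $c(f)c(g) \subseteq \rad(c(fg))$ is at issue.

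For $(1) \Rightarrow (2)$: fix a prime $\underline{p}$ of $R$ with $\underline{p}R^\prime \neq R^\prime$. This extended ideal is a proper ideal of $R^\prime$, so it remains to verify the prime condition. If $fg \in \underline{p}R^\prime$, then $c(fg) \subseteq \underline{p}$, hence $\rad(c(fg)) \subseteq \underline{p}$ because $\underline{p}$ is prime; the weak content formula then yields $c(f)c(g) \subseteq \rad(c(fg)) \subseteq \underline{p}$, and primeness of $\underline{p}$ gives $c(f) \subseteq \underline{p}$ or $c(g) \subseteq \underline{p}$, i.e. $f \in \underline{p}R^\prime$ or $g \in \underline{p}R^\prime$. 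So $\underline{p}R^\prime$ is a prime ideal of $R^\prime$.

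For $(2) \Rightarrow (1)$: fix $f,g \in R^\prime$. Since $\rad(c(fg))$ is the intersection of the primes of $R$ containing $c(fg)$ (the statement being trivial when $c(fg) = R$), it suffices to show $c(f)c(g) \subseteq \underline{p}$ for every prime $\underline{p}$ of $R$ with $c(fg) \subseteq \underline{p}$. For such a $\underline{p}$ we have $fg \in c(fg)R^\prime \subseteq \underline{p}R^\prime$. If $\underline{p}R^\prime$ is prime, then $f \in \underline{p}R^\prime$ or $g \in \underline{p}R^\prime$, so $c(f) \subseteq \underline{p}$ or $c(g) \subseteq \underline{p}$, and in either case $c(f)c(g) \subseteq \underline{p}$. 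If instead $\underline{p}R^\prime = R^\prime$, I would first note that $c(R^\prime) = c(1_{R^\prime})$: for any $x \in R^\prime$, multiplying $1_{R^\prime} \in c(1_{R^\prime})R^\prime$ by $x$ gives $x \in c(1_{R^\prime})(xR^\prime) \subseteq c(1_{R^\prime})R^\prime$, whence $c(x) \subseteq c(1_{R^\prime})$; thus $c(R^\prime) \subseteq c(1_{R^\prime}) \subseteq c(R^\prime)$. Consequently $\underline{p}R^\prime = R^\prime$ is equivalent to $c(R^\prime) \subseteq \underline{p}$, which forces $c(f), c(g) \subseteq c(R^\prime) \subseteq \underline{p}$ and again $c(f)c(g) \subseteq \underline{p}$. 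This establishes the weak content formula.

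The argument is short, and the only genuinely delicate point is the case $\underline{p}R^\prime = R^\prime$ in $(2) \Rightarrow (1)$: unlike the content-algebra definition, the weak-content definition does not assume $c(R^\prime) = R$, so primes $\underline{p}$ with $\underline{p}R^\prime = R^\prime$ can genuinely occur, and the step that makes them harmless is the identification of $c(R^\prime)$ with $c(1_{R^\prime})$, which forces every content $c(x)$ into such a $\underline{p}$.
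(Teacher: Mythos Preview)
The paper does not supply its own proof of this theorem; it quotes the statement from Rush [R, Theorem~1.2, p.~330] and moves on. So there is no in-paper argument to compare against.

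Your proof is correct. The identification $\underline{p}R^\prime = \{x \in R^\prime : c(x) \subseteq \underline{p}\}$ is exactly the right tool, and both directions go through cleanly. The one step worth flagging as the nontrivial observation is your handling of the case $\underline{p}R^\prime = R^\prime$ in $(2) \Rightarrow (1)$: the reduction $c(R^\prime) = c(1_{R^\prime})$ via $x = x \cdot 1_{R^\prime} \in c(1_{R^\prime})R^\prime$ is correct and is precisely what makes those ``degenerate'' primes harmless. This is essentially the standard argument (and likely what Rush does), so there is no meaningful divergence to report.
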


It is obvious that content algebras are weak content algebras, but the converse is not true. For more on this interesting topic, refer to \cite[Example 4.1]{ESh}. We end our introductory section with the following results:

\begin{theorem}
 Let $R$ be a ring and $S$ be a commutative monoid. Then the following statements about the monoid algebra $B = R[S]$ are equivalent:

\begin{enumerate}
 \item $B$ is a content $R$-algebra.
 \item $B$ is a weak content $R$-algebra.
 \item For $f,g \in B$, if $c(f) = c(g) = R$, then $c(fg) = R$.
 \item (\textit{McCoy's Property}) For $g \in B$, $g$ is a zero-divisor of $B$ iff there exists $r \in R-\lbrace 0 \rbrace$ such that $rg = 0$.
 \item $S$ is a cancellative and torsion-free monoid.
 \end{enumerate}

\end{theorem}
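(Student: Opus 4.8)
The plan is to prove the cycle $(1)\Rightarrow(2)\Rightarrow(3)\Rightarrow(5)\Rightarrow(1)$ together with $(5)\Rightarrow(4)\Rightarrow(5)$, which makes all five conditions equivalent. We may assume $R\neq 0$, the case $R=0$ being vacuous. Write the operation of $S$ additively and let $X^{s}$ ($s\in S$) denote the standard $R$-basis of $B=R[S]$, so that $c\bigl(\sum_{s}a_{s}X^{s}\bigr)$ is the $R$-ideal generated by the $a_{s}$. Three of the implications are formal. For $(1)\Rightarrow(2)$, the Dedekind--Mertens equality $c(f)^{n}c(g)=c(f)^{n-1}c(fg)$ gives $(c(f)c(g))^{n}\subseteq c(f)^{n}c(g)\subseteq c(fg)$, hence $c(f)c(g)\subseteq\rad(c(fg))$. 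For $(2)\Rightarrow(3)$, if $c(f)=c(g)=R$ then $R=c(f)c(g)\subseteq\rad(c(fg))$, and an ideal whose radical is $R$ equals $R$, so $c(fg)=R$.

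The combinatorial core is the contrapositive of $(3)\Rightarrow(5)$ and of $(4)\Rightarrow(5)$, both disposed of by a single construction. If $S$ is not cancellative, choose $s\neq t$ and $u$ with $s+u=t+u$, and put $f=X^{s}-X^{t}$, $g=X^{u}$. If instead $S$ is cancellative but not torsion-free, fix $s\neq t$ admitting a positive integer with $ns=nt$, take $n$ minimal (necessarily $n\geq 2$), and put $f=X^{s}-X^{t}$ and $g=\sum_{i=0}^{n-1}X^{(n-1-i)s+it}$. In either case $f\neq 0$ has coefficients $\pm 1$, so $c(f)=R$, and $g\neq 0$ with $c(g)=R$: in the torsion case minimality of $n$ forces the exponents $(n-1-i)s+it$, $0\leq i\leq n-1$, to be pairwise distinct (two of them are equal exactly when $(j-i)s=(j-i)t$ for some $1\leq j-i<n$), so $g$ is a sum of $n$ distinct monomials each of coefficient $1$. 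A telescoping computation then gives $fg=X^{s+u}-X^{t+u}=0$ (resp.\ $fg=X^{ns}-X^{nt}=0$). Hence $c(fg)=(0)\neq R$, contradicting $(3)$; and $f$ is a zero-divisor of $B$ annihilated by $g\neq 0$ but by no nonzero $r\in R$ (since $r\,c(f)=0$ forces $r=0$ as $c(f)=R$), contradicting $(4)$.

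For $(5)\Rightarrow(1)$: as a free $R$-module $B=R[S]$ is faithfully flat and, being projective, is a content $R$-module by [OR, Corollary 1.4]; the explicit description of $c$ gives $c(rf)=r\,c(f)$ and $c(B)=R$ (using $X^{0}\in B$). For Dedekind--Mertens, since $S$ is cancellative it embeds in its group of fractions $G$, which is a torsion-free abelian group because $S$ is torsion-free, and both $c(-)$ and products of elements of $R[S]$ are computed the same inside $R[G]$; as $R[G]$ is a content $R$-algebra for $G$ torsion-free abelian [N], restricting the content formula from $R[G]$ to $R[S]\subseteq R[G]$ yields $(1)$. For $(5)\Rightarrow(4)$ one again works in $R[G]$: any two given elements have their exponents in a finitely generated --- hence free --- subgroup $G_{0}\cong\ZZ^{k}$ of $G$, so they lie in $R[G_{0}]\cong R[Y_{1}^{\pm 1},\dots,Y_{k}^{\pm 1}]$, a localization of $R[Y_{1},\dots,Y_{k}]$; McCoy's property for the polynomial ring (obtained by iterating the one-variable case) passes to this localization, and hence to $R[G]$ and to $R[S]$. (Alternatively, the one-variable McCoy argument runs verbatim in $R[G]$ after fixing a total order on the torsion-free abelian group $G$.)

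The main obstacle is the torsion branch of $(3),(4)\Rightarrow(5)$: it is not enough to exhibit a relation $ns=nt$ --- one must also guarantee that the auxiliary factor $g$ has unit content, and the clean way to secure this is to take $n$ minimal, which makes the monomials of $g$ pairwise distinct. Everything on the $(5)\Rightarrow(1),(4)$ side is routine module theory or a reduction to the polynomial (or group-ring) case, and should present no real difficulty.
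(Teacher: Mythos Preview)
Your proof is correct and follows essentially the same route as the paper. The combinatorial core---the contrapositive of $(3)\Rightarrow(5)$ and $(4)\Rightarrow(5)$, handled by exhibiting $f=X^{s}-X^{t}$ and a companion $g$ with $fg=0$, and in the torsion case taking $n$ minimal so that the monomials of $g$ are pairwise distinct---is exactly the argument the paper gives. The only differences are cosmetic: the paper swaps the roles of $f$ and $g$ in the non-cancellative case (taking $f=X^{s}$, $g=X^{t}-X^{u}$), cites [OR], [R], and [N] for the easy implications $(1)\Rightarrow(2)\Rightarrow(3)$, $(1)\Rightarrow(4)$, and $(5)\Rightarrow(1)$ rather than spelling them out, and reaches $(4)$ via $(1)\Rightarrow(4)$ instead of your $(5)\Rightarrow(4)$ reduction to Laurent polynomials.
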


\begin{proof}
 $(1) \rightarrow (2) \rightarrow (3)$ and $(1) \rightarrow (4)$ are obvious (\cite{OR} and \cite{R}). Also according to \cite{N} (5) implies (1). Therefore the proof will be complete if we prove that (3) and also (4) implies (5).

$(3) \rightarrow (5)$: We prove that if $S$ is not cancellative nor torsion-free then (3) cannot hold. For the moment, suppose that $S$ is not cancellative, so there exist $s,t,u \in S$ such that $s+t = s+u$ while $t \not= u$. Put $f = X^s$ and $g = (X^{t}-X^u)$. Then obviously $c(f) = c(g) = R$, while $c(fg) = (0)$. Finally suppose that $S$ is cancellative but not torsion-free. Let $s,t \in S$ be such that $s \not=t$, while $ns = nt$ for some natural $n$. Choose the natural number $k$ minimal so that $ns = nt$. Then we have $0 = X^{ks}-X^{kt} = (X^s-X^t)(\sum_{i=0}^{k-1} X^{(k-i-1)s+it}) $.

Since $S$ is cancellative, the choice of $k$ implies that $ (k-i_1-1)s+i_{1}t \not= (k-i_2-1)s+i_{2}t $ for $0 \leq i_1 < i_2 \leq k-1 $.
Therefore $\sum_{i=0}^{k-1} X^{(k-i-1)s+it} \not= 0$, and this completes the proof.

In a similar way one can prove $(4) \rightarrow (5)$ \cite[p. 82]{G2}.
\end{proof}

\begin{remark}
 Let $S$ be a commutative monoid and $M$ be a nonzero $R$-module. It is trivial that $M[S]$ is an $R[S]$-module. Let $g \in M[S]$ and put $g = m_1s_1+m_2s_2+\cdots+m_ns_n$, where $m_1,\ldots,m_n \in M$ and $s_1,\ldots,s_n \in S$. We define the content of $g$ to be the $R$-submodule of $M$ generated by the coefficients of $g$, i.e. $c(g) = (m_1,\ldots,m_n)$. The following statements are equivalent:
\begin{enumerate}
 \item $S$ is a cancellative and torsion-free monoid.
 \item For all $f \in R[S]$ and $g \in M[S]$, there exists a natural number $k$ such that $c(f)^k c(g) = c(f)^{k-1} c(fg)$.
 \item (\textit{McCoy's Property}) For all $f \in R[S]$ and $g \in M[S]-\lbrace0\rbrace$, if $fg = 0$, then there exists an $m \in M-\lbrace 0 \rbrace$ such that $f \cdot m = 0$.
\end{enumerate}

\end{remark}

\begin{proof}
 $(1) \rightarrow (2)$ and $(2) \rightarrow (3)$ have been proved in \cite{N} and \cite{M} respectively. For $(3) \rightarrow (1)$ use the technique in the previous theorem.
\end{proof}

\section{Prime Ideals in Content Algebras}

Let $B$ be a weak content $R$-algebra such that for all $\underline{m} \in \Max(R)$ (by $\Max(R)$, we mean the maximal ideals of $R$), we have $\underline{m}B \not = B$, then by Theorem \ref{prime}, prime ideals extend to prime ideals. Particularly in $R$-content algebras - that are faithfully flat $R$-modules by definition - primes extend to primes. We recall that when $B$ is a content $R$-algebra, then $g$ is a zero-divisor of $B$, iff there exists an $r \in R- \lbrace 0 \rbrace $ such that $rg = 0$ \cite[6.1, p. 63]{OR}. Now we give the following theorem about associated prime ideals. We assert that by $\Ass_R (M)$, we mean the associated prime ideals of $R$-module $M$.

\begin{theorem}
 Let $B$ be a content $R$-algebra and $M$ a nonzero $R$-module. If $\underline{p} \in \Ass_R (M)$ then $\underline{p}B \in \Ass_B (M \otimes_R B)$.
\end{theorem}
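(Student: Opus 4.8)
The plan is to exhibit $\underline{p}B$ as the $B$-annihilator of a single, concretely understood element of $M\otimes_R B$. Since $\underline{p}\in\Ass_R(M)$, fix $x\in M$ with $\underline{p}=\Ann_R(x)$; the candidate element is $x\otimes 1$. The first thing I would record is that $\underline{p}B$ is a prime ideal of $B$: a content $R$-algebra is in particular a faithfully flat weak content $R$-module, so for any maximal ideal $\underline{m}\supseteq\underline{p}$ we have $\underline{p}B\subseteq\underline{m}B\subsetneq B$, and hence $\underline{p}B$ is prime by Theorem 7. (This is exactly the ``primes extend to primes'' fact recalled at the beginning of this section.)

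Next I would pin down the submodule of $M\otimes_R B$ in which $x\otimes 1$ sits. The $R$-linear map $R\to M$, $r\mapsto rx$, has kernel $\underline{p}$, so it induces an isomorphism $R/\underline{p}\cong Rx$ carrying $1+\underline{p}$ to $x$. Since $B$ is flat over $R$, applying $-\otimes_R B$ to the inclusion $Rx\hookrightarrow M$ yields an injection of $B$-modules $Rx\otimes_R B\hookrightarrow M\otimes_R B$; and right-exactness of $-\otimes_R B$ applied to $0\to\underline{p}\to R\to R/\underline{p}\to 0$ gives $Rx\otimes_R B\cong(R/\underline{p})\otimes_R B\cong B/\underline{p}B$. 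Tracing these identifications, $x\otimes 1$ corresponds to $1+\underline{p}B$.

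Finally I would compute the annihilator. For $f\in B$ the element $f\,(x\otimes 1)=x\otimes f$ is the image under the above injection of the element $x\otimes f$ of $Rx\otimes_R B$, so $f\,(x\otimes 1)=0$ in $M\otimes_R B$ if and only if it already vanishes in $Rx\otimes_R B$, i.e. if and only if $f+\underline{p}B=0$ in $B/\underline{p}B$, i.e. if and only if $f\in\underline{p}B$. Hence $\Ann_B(x\otimes 1)=\underline{p}B$; in particular $x\otimes 1\neq 0$, so $M\otimes_R B\neq(0)$ and $\underline{p}B\in\Ass_B(M\otimes_R B)$, as desired.

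The hypotheses do real work here: flatness of $B$ is precisely what keeps $Rx\otimes_R B\hookrightarrow M\otimes_R B$ injective, which is what allows the annihilator of $x\otimes 1$ to be read off inside the small, fully understood module $B/\underline{p}B$ rather than inside $M\otimes_R B$ directly; and faithful flatness together with the weak content property is what guarantees that $\underline{p}B$ is a proper (hence prime) ideal rather than all of $B$. Beyond this, there is essentially no obstacle, only routine tensor bookkeeping, and the same argument carries over verbatim to monoid modules in the sense of the preceding remark.
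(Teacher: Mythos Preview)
Your proof is correct and follows essentially the same approach as the paper's own argument: exhibit the exact sequence $0\to R/\underline{p}\to M$, tensor with the flat module $B$ to obtain $0\to B/\underline{p}B\to M\otimes_R B$, read off $\Ann_B(x\otimes 1)=\underline{p}B$, and invoke the ``primes extend to primes'' property of content algebras to conclude that $\underline{p}B$ is prime. You have simply made explicit the tensor bookkeeping and the annihilator computation that the paper leaves implicit.
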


\begin{proof}
 Let $\underline{p} \in \Ass_R (M)$, therefore $ 0 \longrightarrow R/\underline{p} \longrightarrow M$ is an $R$-exact sequence. Since $B$ is a faithfully flat $R$-module, we have the following $B$-exact sequence:

$$ 0 \longrightarrow B/\underline{p}B \longrightarrow M \otimes_R B$$

with $\underline{p}B = \Ann (x \otimes_R 1_B)$. Since $B$ is a content $R$-algebra, $\underline{p}B$ is a prime ideal of $B$.
\end{proof}

We give a general theorem on minimal prime ideals in algebras. One of the results of this theorem is that in faithfully flat weak content algebras (including content algebras), minimal prime ideals extend to minimal prime ideals and more precisely, there is actually a correspondence between the minimal prime ideals of the ring and their extensions in the algebra.

\begin{theorem}

\label{min}

 Let $B$ be an $R$-algebra with the following properties:

\begin{enumerate}
 \item For each prime ideal $\underline{p}$ of $R$, the extended ideal $\underline{p}B$ of $B$ is prime.
 \item For each prime ideal  $\underline{p}$ of $R$, $\underline{p}B \cap R = \underline{p}$.
\end{enumerate}

Then the function $ \phi : \Min(R) \longrightarrow \Min(B)$ given by $\underline{p} \longrightarrow \underline{p}B$ is a bijection.

\begin{proof}
First we prove that if $\underline{p}$ is a minimal prime ideal of $R$, then $\underline{p}B$ is also a minimal prime ideal of $B$. Let $Q$ be a prime ideal of $B$ such that $Q \subseteq \underline{p}B$. So $Q \cap R \subseteq \underline{p}B \cap R = \underline{p}$. Since $\underline{p}$ is a minimal prime ideal of $R$, we have $Q \cap R =\underline{p}$ and therefore $ Q = \underline{p}B $. This means that $ \phi $ is a well-defined function. Obviously the second condition causes $ \phi $ to be one-to-one. The next step is to prove that $ \phi $ is onto. For showing this, consider $Q \in \Min(B)$, so $Q \cap R$ is a prime ideal of $R$ such that $(Q \cap R)B \subseteq Q$ and therefore $(Q \cap R)B = Q$. Our claim is that $(Q \cap R)$ is a minimal prime ideal of $R$. Suppose $\underline{p}$ is a prime ideal of $R$ such that $ \underline{p} \subseteq Q \cap R$, then $\underline{p}B \subseteq Q$ and since $Q$ is a minimal prime ideal of $B$, $\underline{p}B = Q = (Q \cap R)B$ and therefore $\underline{p} = Q \cap R$.
\end{proof}

\end{theorem}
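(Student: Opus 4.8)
The plan is to establish that $\phi$ is well-defined, injective, and surjective, feeding the two hypotheses in at the appropriate points: hypothesis (2) handles injectivity together with the ``no prime strictly below $\underline{p}B$'' bookkeeping, while hypothesis (1) is what makes surjectivity go through.

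First I would show $\phi$ lands in $\Min(B)$. Fix $\underline{p} \in \Min(R)$. By (1), $\underline{p}B$ is prime, so I only need to rule out a prime $Q$ of $B$ with $Q \subsetneq \underline{p}B$. Contracting to $R$ and using (2) gives $Q \cap R \subseteq \underline{p}B \cap R = \underline{p}$, whence $Q \cap R = \underline{p}$ by minimality of $\underline{p}$; since $(Q \cap R)B \subseteq Q$ always holds, this forces $\underline{p}B \subseteq Q$, so $Q = \underline{p}B$. Thus $\underline{p}B \in \Min(B)$. Injectivity is then immediate: if $\underline{p}B = \underline{q}B$, contract and apply (2) to get $\underline{p} = \underline{q}$.

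For surjectivity I would take $Q \in \Min(B)$ and put $\underline{p} = Q \cap R$, a prime ideal of $R$. Because $\underline{p}B \subseteq Q$ and $\underline{p}B$ is prime by (1), minimality of $Q$ gives $\underline{p}B = Q$. It remains to check $\underline{p} \in \Min(R)$: given a prime $\underline{q} \subseteq \underline{p}$ of $R$, we get $\underline{q}B \subseteq \underline{p}B = Q$ with $\underline{q}B$ prime (again by (1)), so $\underline{q}B = Q$ by minimality of $Q$, and contracting via (2) yields $\underline{q} = \underline{q}B \cap R = Q \cap R = \underline{p}$. Hence $Q = \phi(\underline{p})$.

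The argument is entirely formal; the one spot that deserves care is surjectivity, where hypothesis (1) must be invoked a second time so that $(Q \cap R)B$ and $\underline{q}B$ are prime and the minimality of $Q$ can actually be applied --- without (1) one could not even conclude $(Q\cap R)B = Q$. Finally, this theorem applies to faithfully flat weak content $R$-algebras, and in particular to content $R$-algebras: (1) holds by Theorem 7 together with faithful flatness (which rules out $\underline{p}B = B$), and (2) holds because faithful flatness gives $\underline{p}B \cap R = \underline{p}$ for every ideal $\underline{p}$.
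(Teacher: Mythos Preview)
Your proof is correct and follows essentially the same route as the paper's: show $\phi$ lands in $\Min(B)$ by contracting a would-be smaller prime and using (2), get injectivity from (2), and get surjectivity by setting $\underline{p}=Q\cap R$, using (1) and minimality of $Q$ to force $\underline{p}B=Q$, then checking $\underline{p}\in\Min(R)$ the same way. If anything, you are slightly more careful than the paper in flagging that hypothesis (1) is needed twice in the surjectivity step (so that $(Q\cap R)B$ and $\underline{q}B$ are prime and minimality of $Q$ actually applies); the paper leaves this implicit.
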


\begin{corollary}
 Let $B$ be a weak content and faithfully flat $R$-algebra, then the function $ \phi : \Min(R) \longrightarrow \Min(B)$ given by $\underline{p} \longrightarrow \underline{p}B$ is a bijection.
\end{corollary}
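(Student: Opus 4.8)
The plan is to deduce this directly from Theorem~15 by verifying its two hypotheses for a weak content and faithfully flat $R$-algebra $B$. Condition (1) of Theorem~15 requires that $\underline{p}B$ be a prime ideal of $B$ for every prime ideal $\underline{p}$ of $R$. By Rush's characterization (Theorem~7), since $B$ is a content $R$-module, for each prime $\underline{p}$ either $\underline{p}B$ is prime or $\underline{p}B = B$, so it suffices to rule out the second alternative. Here I would invoke faithful flatness: flatness gives $(R/\underline{p}) \otimes_R B \cong B/\underline{p}B$, and faithful flatness gives $(R/\underline{p}) \otimes_R B \neq 0$ since $R/\underline{p} \neq 0$; hence $\underline{p}B \neq B$, and therefore $\underline{p}B$ is prime.

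For condition (2), that $\underline{p}B \cap R = \underline{p}$ for every prime $\underline{p}$ of $R$, I would again use faithful flatness. The isomorphism $B/\underline{p}B \cong (R/\underline{p}) \otimes_R B$ from the previous step exhibits $B/\underline{p}B$ as the base change of $B$ along $R \to R/\underline{p}$, hence as a faithfully flat $(R/\underline{p})$-algebra; in particular the structure map $R/\underline{p} \to B/\underline{p}B$ is injective, because a faithfully flat algebra is a faithful module and so its structure homomorphism is a monomorphism. Consequently the kernel of the composite $R \to B \to B/\underline{p}B$, which is exactly $\underline{p}B \cap R$, equals $\underline{p}$.

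With both hypotheses in hand, Theorem~15 applies verbatim and yields that $\phi : \Min(R) \longrightarrow \Min(B)$, $\underline{p} \longmapsto \underline{p}B$, is a bijection. I do not expect a genuine obstacle: the substance of the corollary lies entirely in Theorem~15, and all that remains is to translate ``weak content and faithfully flat'' into the two bullet conditions of that theorem. The only mildly delicate point is the use of faithful flatness to guarantee $\underline{p}B \neq B$ (equivalently, that the induced map on spectra hits all of $\Min(R)$); the rest is formal.
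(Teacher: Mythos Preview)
Your proof is correct and follows the same overall plan as the paper: verify the two hypotheses of the preceding general theorem on $\Min$ and then apply it. For condition~(1) you and the paper both invoke Rush's characterization together with faithful flatness to rule out $\underline{p}B = B$. The genuine difference is in condition~(2). The paper stays within the content-function framework: from Theorem~5 one has $c(1_B)=R$ and $c(rf)=rc(f)$, so $c(r)=(r)$ for every $r\in R$; then $r\in IB\cap R$ gives $(r)=c(r)\subseteq I$, proving $IB\cap R=I$ for \emph{every} ideal $I$. You instead use the standard faithful-flatness fact that $R/\underline{p}\to B/\underline{p}B$ is injective. Your route is more general (it needs only faithful flatness, not the content structure) and is the textbook argument; the paper's route is more in keeping with its theme and incidentally yields the slightly stronger statement that contraction recovers any ideal, prime or not.
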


\begin{proof}
Since $B$ is a weak content and faithfully flat $R$-algebra, then for each prime ideal $\underline{p}$ of $R$, the extended ideal $\underline{p}B$ of $B$ is prime and also $c(1_B)=R$ by \cite[Corollary 1.6]{OR} and \cite[Theorem 1.2]{R}. Now consider $r \in R$, then $c(r) = c(r\cdot 1_B) = r \cdot c(1_B) = (r)$. Therefore if $r \in IB \cap R$, then $(r) = c(r) \subseteq I$.  Thus for each prime ideal  $\underline{p}$ of $R$, $\underline{p}B \cap R = \underline{p}$.
\end{proof}

\begin{corollary}
 Let $R$ be a Noetherian ring. Then $ \phi : \Min(R) \longrightarrow \Min(R[[X_1,\ldots,X_n]])$ given by $\underline{p} \longrightarrow \underline{p}.(R[[X_1,\ldots,X_n]])$ is a bijection.
\end{corollary}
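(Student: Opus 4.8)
The plan is to obtain this corollary as an instance of the preceding one, so that the whole statement reduces to checking that $B := R[[X_1,\dots,X_n]]$ is a weak content and faithfully flat $R$-algebra whenever $R$ is Noetherian.

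For the weak content part I would simply invoke the fact recorded just after Theorem~7 (taken from [R, p.~331]): when $R$ is Noetherian, $R[[X_1,\dots,X_n]]$ is a weak content $R$-algebra. Note that this already includes that $B$ is a content $R$-module, which is part of the definition of a weak content algebra, so the only genuinely new point to establish is that $B$ is faithfully flat over $R$.

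Faithful flatness I would prove in two steps. First, flatness: $R[[X]]$ is flat over any Noetherian ring $R$ --- the classical result (see e.g.\ Matsumura or Bourbaki) --- and hence by induction $R[[X_1,\dots,X_n]]$ is flat over $R$. This is the one place where the Noetherian hypothesis is really used, and I expect it to be the main technical input of the proof, since over a general ring the power series ring can fail to be $R$-flat. Second, faithfulness: a flat $R$-algebra is faithfully flat as soon as $\underline{m}B \neq B$ for every $\underline{m} \in \Max(R)$, and here the coefficientwise reduction map $B \to (R/\underline{m})[[X_1,\dots,X_n]]$ is a ring surjection onto a nonzero ring that annihilates $\underline{m}B$, so $\underline{m}B \neq B$.

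Once $B$ is known to be a weak content and faithfully flat $R$-algebra, the preceding corollary applies verbatim and yields that $\phi : \Min(R) \longrightarrow \Min(B)$, $\underline{p} \longmapsto \underline{p}B$, is a bijection. Thus the bijection itself costs nothing beyond the earlier general theorem on minimal primes; the only real work lies in citing and justifying the flatness of the power series ring over a Noetherian base, which is where the main obstacle sits.
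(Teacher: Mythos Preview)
Your proposal is correct and matches the paper's intended argument: the corollary is stated without proof precisely because it follows from the preceding corollary once one knows that $R[[X_1,\dots,X_n]]$ is a weak content $R$-algebra for Noetherian $R$ (already recorded in the paper from [R, p.~331]) and is faithfully flat over $R$ (standard, as you outline). There is nothing to add.
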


\section{Rings and modules having few zero-divisors}

For a ring $R$, by $Z(R)$, we mean the set of zero-divisors of $R$. In \cite{D}, it has been defined that a ring $R$ has \textit{few zero-divisors}, if $Z(R)$ is a finite union of prime ideals. We present the following definition to prove some other theorems related to content algebras.

 \begin{definition}
  A ring $R$ has \textit{very few zero-divisors}, if $Z(R)$ is a finite union of prime ideals in $\Ass(R)$.
 \end{definition}

\begin{theorem}

\label{vfzd}

 Let $R$ be a ring that has very few zero-divisors. If $B$ is a content $R$-algebra, then $B$ has very few zero-divisors also.
\end{theorem}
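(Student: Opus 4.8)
The plan is to identify $Z(B)$ explicitly with a finite union of extended associated primes. By hypothesis we may write $Z(R)=\bigcup_{i=1}^{n}\mathfrak{p}_i$ with each $\mathfrak{p}_i\in\Ass(R)$, and I claim that $Z(B)=\bigcup_{i=1}^{n}\mathfrak{p}_iB$. Granting this claim, the theorem follows immediately: each $\mathfrak{p}_i$ is prime, so $\mathfrak{p}_iB$ is a prime ideal of $B$ since $B$ is a content (hence weak content and faithfully flat) $R$-algebra; and since $\mathfrak{p}_i\in\Ass(R)=\Ass_R(R)$, applying the earlier theorem on associated primes with $M=R$ (so that $R\otimes_R B=B$) gives $\mathfrak{p}_iB\in\Ass_B(B)$. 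Thus $Z(B)$ is a finite union of primes in $\Ass(B)$, i.e.\ $B$ has very few zero-divisors.

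For the inclusion $\bigcup_{i=1}^{n}\mathfrak{p}_iB\subseteq Z(B)$, I would fix $i$ and take $g\in\mathfrak{p}_iB$. By the very definition of the content function, $g\in\mathfrak{p}_iB$ forces $c(g)\subseteq\mathfrak{p}_i$. Since $\mathfrak{p}_i\in\Ass(R)$ there is $x\in R\setminus\{0\}$ with $\mathfrak{p}_i=\Ann_R(x)$, whence $x\,c(g)=0$, so by the faithful-flatness axiom $c(xg)=x\,c(g)=0$ and therefore $xg\in c(xg)B=0$. Thus $xg=0$ with $x\neq 0$, and McCoy's property for content algebras ([OR, 6.1, p.~63]) gives $g\in Z(B)$.

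For the reverse inclusion $Z(B)\subseteq\bigcup_{i=1}^{n}\mathfrak{p}_iB$, let $g\in Z(B)$. By McCoy's property there is $r\in R\setminus\{0\}$ with $rg=0$, so $r\,c(g)=c(rg)=0$; hence every element of the ideal $c(g)$ is annihilated by the nonzero element $r$, that is, $c(g)\subseteq Z(R)=\bigcup_{i=1}^{n}\mathfrak{p}_i$. By prime avoidance $c(g)\subseteq\mathfrak{p}_i$ for some $i$, and since $B$ is a content $R$-module we conclude $g\in c(g)B\subseteq\mathfrak{p}_iB$. This establishes the claim.

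I do not expect a serious obstacle here once McCoy's property is available; the single point that needs care is exactly where the hypothesis enters. It is the strengthening from ``few zero-divisors'' to ``very few zero-divisors'' (i.e.\ the $\mathfrak{p}_i$ being genuinely associated primes, not merely prime) that lets one run the $\supseteq$ inclusion: it produces, for each $i$, a nonzero element killing all of $c(g)$ whenever $c(g)\subseteq\mathfrak{p}_i$ — in effect $R$ here automatically has Property~(A). Without this, $c(g)\subseteq Z(R)$ could fail to give a nonzero annihilator of $c(g)$, and the equality $Z(B)=\bigcup_i\mathfrak{p}_iB$ would break down.
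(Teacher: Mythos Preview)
Your argument is correct and follows essentially the same route as the paper: write $Z(R)=\bigcup_i\mathfrak p_i$ with $\mathfrak p_i\in\Ass(R)$, prove $Z(B)=\bigcup_i\mathfrak p_iB$ via McCoy's property and prime avoidance for $\subseteq$ and the nonzero annihilator of each $\mathfrak p_i$ for $\supseteq$, then invoke the earlier associated-primes theorem to get $\mathfrak p_iB\in\Ass_B(B)$. One cosmetic remark: in the $\supseteq$ step, once you have $xg=0$ with $0\neq x\in R\subseteq B$, this already places $g$ in $Z(B)$; McCoy's property is only needed for the opposite inclusion.
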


\begin{proof}
 Let $Z(R) = \underline{p_1}\cup \underline{p_2}\cup \cdots \cup \underline{p_n}$, where $\underline{p_i} \in \Ass_R(R)$ for all $1 \leq i \leq n$. We will show that $Z(B) = \underline{p_1}B\cup \underline{p_2}B\cup \cdots \cup \underline{p_n}B$. Let $g \in Z(B)$, so there exists an $r \in R- \lbrace 0 \rbrace $ such that $rg = 0$ and so $rc(g) = (0)$. Therefore $c(g) \subseteq Z(R)$ and this means that $c(g) \subseteq \underline{p_1}\cup \underline{p_2}\cup \cdots \cup \underline{p_n}$ and according to Prime Avoidance Theorem, we have $c(g) \subseteq \underline{p_i}$, for some $1 \leq i \leq n$ and therefore $g \in \underline{p_i}B$. Now let $g \in \underline{p_1}B\cup \underline{p_2}B\cup \cdots \cup \underline{p_n}B$ so there exists an $i$ such that $g \in \underline{p_i}B$, so $c(g) \subseteq \underline{p_i}$ and $c(g)$ has a nonzero annihilator and this means that g is a zero-divisor of $B$. Note that $\underline{p_i}B \in \Ass_B(B)$, for all $1 \leq i \leq n$.
\end{proof}

\begin{remark}
Let $R$ be a ring and consider the following three conditions on $R$:

\begin{enumerate}
 \item $R$ is a Noetherian ring.
 \item $R$ has very few zero-divisors.
 \item $R$ has few zero-divisors.
\end{enumerate}

Then, $(1) \rightarrow (2) \rightarrow (3)$ and none of the implications are reversible.
\end{remark}

\begin{proof}
For  $(1) \rightarrow(2) $ use \cite[p. 55]{K}. It is obvious that $(2) \rightarrow (3)$.

Suppose $k$ is a field, $A=k[X_1, X_2, X_3,\ldots,X_n,\ldots]$ and $\underline{m} =(X_1, X_2, X_3,\ldots, X_n,\ldots)$ and at last $I=(X_1^2, X_2^2, X_3^2,\ldots, X_n^2,\ldots)$. Since $A$ is a content $k$-algebra and $k$ has very few zero-divisors, $A$ has very few zero-divisors while it is not a Noetherian ring. Also consider the ring $R=A/I$. It is easy to check that $R$ is a quasi-local ring with the only prime ideal $\underline{m}/I$ and $Z(R)=\underline{m}/I$ and finally $\underline{m}/I\notin \Ass_R(R)$. Note that $\Ass_R(R)=\emptyset$.
\end{proof}

Now we bring the following definition from \cite{HK} and prove some other results for content algebras.

\begin{definition}
  A ring $R$ has \textit{Property (A)}, if each finitely generated ideal $I \subseteq Z(R)$ has a nonzero annihilator.
 \end{definition}

Let $R$ be a ring. If $R$ has very few zero-divisors (for example if $R$ is Noetherian), then $R$ has Property (A) \cite[Theorem 82, p. 56]{K}, but there are some non-Noetherian rings which have not Property (A) \cite[Exercise 7, p. 63]{K}. The class of non-Noetherian rings having Property (A) is quite large \cite[p. 2]{H}.

\begin{theorem}
 Let $B$ be a content $R$-algebra such that $R$ has Property (A). Then $T(B)$ is a content $T(R)$-algebra, where by $T(R)$, we mean total quotient ring of $R$.
\end{theorem}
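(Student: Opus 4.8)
The plan is to compute the content function of $T(B)$ over $T(R)$ explicitly and then to verify the three axioms of a content algebra one at a time. Write $S=R\setminus Z(R)$, so that $T(R)=S^{-1}R$, and $\tilde S=B\setminus Z(B)$, so that $T(B)=\tilde S^{-1}B$. I expect that the content of an element $g\in B$, viewed inside $T(B)$ over $T(R)$, is $c_{T(R)}(g)=S^{-1}(c_R(g))$, and that the whole argument rests on this single formula; Property~(A) will be used exactly once, to control the $R$-contents of those nonzerodivisors of $B$ that serve as denominators in $T(B)$.

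I would first settle the bookkeeping. If $s\in S$ and $sf=0$ in $B$, then $s\,c_R(f)=c_R(sf)=0$, hence $c_R(f)\subseteq\Ann_R(s)=0$ and $f\in c_R(f)B=0$; thus $S$ consists of nonzerodivisors of $B$, the structure map $R\to B$ carries $S$ into the units of $T(B)$, and therefore $T(B)$ is a $T(R)$-algebra with $T(B)=\tilde S^{-1}(S^{-1}B)$. Next comes the only appeal to Property~(A): by the description of zero-divisors in a content algebra ([OR, 6.1]), an element $h\in B$ is a nonzerodivisor if and only if $\Ann_R(c_R(h))=0$; since $B$ is a content $R$-module, $c_R(h)$ is finitely generated, so Property~(A) forces $c_R(h)\not\subseteq Z(R)$, that is, $c_R(h)$ meets $S$, and consequently $S^{-1}(c_R(h))=T(R)$. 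Informally, the $R$-contents of the denominators occurring in $T(B)$ all become the unit ideal once we pass to $T(R)$.

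The heart of the proof is the identity $c_{T(R)}(g)=S^{-1}(c_R(g))$ for $g\in B$; granting it, $c_{T(R)}(g/t)=c_{T(R)}(g)$ for every $t\in\tilde S$, since the content of an element is unchanged when it is multiplied by a unit of $T(B)$. The inclusion $c_{T(R)}(g)\subseteq S^{-1}(c_R(g))$ is immediate, because $g\in c_R(g)B\subseteq S^{-1}(c_R(g))\,T(B)$. For the reverse inclusion, let $J$ be an arbitrary ideal of $T(R)$ with $g\in J\,T(B)$ and set $I=J\cap R$, so $J=S^{-1}I$. Writing a membership $g\in J\,T(B)$ as a finite sum and clearing denominators produces $\sigma\tau g\in IB$ for suitable $\sigma\in S$ and $\tau\in\tilde S$, whence $\sigma\,c_R(\tau g)=c_R(\sigma\tau g)\subseteq I$. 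Applying the Dedekind-Mertens formula to $\tau$ and $g$, localizing at $S$, and using $S^{-1}(c_R(\tau))=T(R)$ from the previous step, the powers of $c_R(\tau)$ disappear and one gets $S^{-1}(c_R(g))=S^{-1}(c_R(\tau g))$; since $\sigma/1$ is a unit of $T(R)$ while $S^{-1}(c_R(\tau g))$ is an ideal, the containment $\sigma\,c_R(\tau g)\subseteq I$ localizes to $S^{-1}(c_R(\tau g))\subseteq J$. Hence $S^{-1}(c_R(g))\subseteq J$ for every such $J$, and intersecting over all of them gives $c_{T(R)}(g)\supseteq S^{-1}(c_R(g))$.

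With the formula in hand, the three axioms for $T(B)$ over $T(R)$ follow routinely. Any $f\in T(B)$ equals $g/t$ with $g\in B$ and $t\in\tilde S$, and then $f=g\,t^{-1}\in c_R(g)\,T(B)=c_{T(R)}(f)\,T(B)$, so $T(B)$ is a content $T(R)$-module. For $r=a/s\in T(R)$ one computes $c_{T(R)}(rf)=c_{T(R)}(ag)=S^{-1}(a\,c_R(g))=(a/1)\,S^{-1}(c_R(g))=(a/s)\,S^{-1}(c_R(g))=r\,c_{T(R)}(f)$, using $c_R(ag)=a\,c_R(g)$ in the content $R$-algebra $B$ and the fact that a unit of $T(R)$ fixes every ideal of $T(R)$; moreover $c_{T(R)}(1_{T(B)})=S^{-1}(c_R(1_B))=S^{-1}R=T(R)$, since $c_R(1_B)=R$. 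Finally, for $f=g/t$ and $f'=g'/t'$ in $T(B)$ the Dedekind-Mertens relation $c_{T(R)}(f)^n\,c_{T(R)}(f')=c_{T(R)}(f)^{n-1}\,c_{T(R)}(ff')$ is precisely the image under $S^{-1}(-)$ of the corresponding relation for $g$ and $g'$ over $R$, because localization commutes with products of ideals and $c_{T(R)}(ff')=S^{-1}(c_R(gg'))$. Faithful flatness of $T(B)$ over $T(R)$ is then automatic from the characterization of faithfully flat content modules (flatness via $c(rx)=rc(x)$, together with $c(T(B))=T(R)$). The one genuinely substantial point---and the sole use of Property~(A)---is the reverse inclusion in the content formula, where a nonzerodivisor of $B$ lodged in a denominator must be absorbed by showing that its $R$-content extends to the unit ideal of $T(R)$; everything else is a localization of facts already available for $B$ over $R$.
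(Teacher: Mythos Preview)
Your argument is correct, but it takes a genuinely different route from the paper's. The paper's proof is a two-line reduction: it sets $S'=B\setminus Z(B)$, checks that $S'\cap R=R\setminus Z(R)$, and then verifies the single hypothesis of the Ohm--Rush localization theorem [OR, Theorem~6.2] (namely, that $c(f)\cap S\neq\emptyset$ for every $f\in S'$), which immediately yields that $T(B)=S'^{-1}B$ is a content $T(R)$-algebra. Property~(A) enters at exactly the same place in both arguments---to show that a nonzerodivisor of $B$ has $R$-content meeting $S$---but the paper then hands everything else off to the cited theorem.

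What you do instead is essentially reprove the relevant special case of [OR, Theorem~6.2] from scratch: you compute $c_{T(R)}(g/t)=S^{-1}(c_R(g))$ explicitly (the reverse inclusion being the only nontrivial step, handled via Dedekind--Mertens and clearing denominators), and then verify the three content-algebra axioms directly. This buys you a fully self-contained proof that makes transparent exactly where each hypothesis is consumed, at the cost of redoing work already packaged in the Ohm--Rush paper. The paper's approach is much shorter but opaque unless one has [OR, Theorem~6.2] in front of oneself; yours would be preferable in a context where that reference is not assumed.
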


\begin{proof}
 Let $S^ \prime = B-Z(B)$. If $S = S^ \prime \cap R$, then $S = R-Z(R)$. We prove that if $c(f) \cap S = \emptyset $, then $f \not\in S^ \prime$. In fact when $c(f) \cap S = \emptyset $, then $c(f) \subseteq Z(R)$ and since $R$ has Property (A), $c(f)$ has a nonzero annihilator. This means that $f$ is a zero-divisor of $B$ and according to \cite[Theorem 6.2, p. 64]{OR}, the proof is complete.
\end{proof}

\begin{theorem}
 Let $B$ be a content $R$-algebra such that the content function $c: B \longrightarrow \FId(R)$ is onto, where by $\FId(R)$, we mean the set of finitely generated ideals of $R$. The following statements are equivalent:

\begin{enumerate}
 \item $R$ has Property (A).
 \item For all $f \in B$, $f$ is a regular member of $B$ iff $c(f)$ is a regular ideal of $R$.
\end{enumerate}

\begin{proof}
 $(1) \rightarrow (2)$: Let $R$ has Property (A). If $f \in B$ is regular, then for all nonzero $r \in R$, $rf \not= 0$ and so for all nonzero $r \in R$, $rc(f) \not= (0)$, i.e. $\text{Ann}(c(f)) = (0)$ and according to the definition of Property (A), $c(f) \not\subseteq Z(R)$. This means that $c(f)$ is a regular ideal of $R$. Now let $c(f)$ be a regular ideal of $R$, so $c(f) \not\subseteq Z(R)$ and therefore $\text{Ann}(c(f)) = (0)$. This means that for all nonzero $r \in R$, $rc(f) \not= (0)$, hence for all nonzero $r \in R$, $rf \not= 0$. Since $B$ is a content $R$-algebra, $f$ is not a zero-divisor of $B$.

$(2) \rightarrow (1)$: Let $I$ be a finitely generated ideal of $R$ such that $I \subseteq Z(R)$. Since the content function $c: B \longrightarrow \FId(R)$ is onto, there exists an $f \in B$ such that $c(f) = I$. But $c(f)$ is not a regular ideal of $R$, therefore according to our assumption, $f$ is not a regular member of $B$. Since $B$ is a content $R$-algebra, there exists a nonzero $r \in R$ such that $rf = 0$ and this means that $rI = (0)$, i.e. $I$ has a nonzero annihilator.
\end{proof}

\begin{remark}
 In the above theorem the surjectivity condition for the content function $c$ is necessary, because obviously $R$ is a content $R$-algebra and the condition (2) is satisfied, while one can choose the ring $R$ such that it does not have Property (A) \cite[Exercise 7, p. 63]{K}.
\end{remark}

\end{theorem}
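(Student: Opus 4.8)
The plan is to reduce both implications to a single dictionary between regularity of an element of $B$ and the annihilator of its content ideal in $R$, and to invoke Property (A) or the surjectivity of $c$ only at the very end. First I would record the dictionary. Since $B$ is a content $R$-algebra, faithful flatness gives $c(rf)=r\,c(f)$ for all $r\in R$ and $f\in B$, and on a content module $c(h)=(0)$ forces $h=0$; hence $rf=0\iff r\,c(f)=(0)$. Coupling this with the fact that $g\in B$ is a zero-divisor of $B$ iff $rg=0$ for some $r\in R\setminus\{0\}$ (the McCoy-type characterization recalled in Section~3, cf.\ [OR, Theorem~6.2, p.~64]) yields
\[
 f\text{ is a regular element of }B\iff\Ann_R(c(f))=(0).
\]
Since each $c(f)$ is a finitely generated ideal of $R$ (as recalled above) and an ideal of $R$ is a regular ideal exactly when it is not contained in $Z(R)$ --- no prime avoidance, and in particular no finiteness of $Z(R)$, is needed for this --- statement (2) becomes the purely ring-theoretic assertion: for every $f\in B$, $\Ann_R(c(f))=(0)\iff c(f)\not\subseteq Z(R)$.

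For $(1)\Rightarrow(2)$, I would fix $f\in B$. The implication $c(f)\not\subseteq Z(R)\Rightarrow\Ann_R(c(f))=(0)$ holds in any ring, since a regular element of $c(f)$ kills any annihilator. For the converse I would use Property (A): if $\Ann_R(c(f))=(0)$ yet $c(f)\subseteq Z(R)$, then Property (A) applied to the finitely generated ideal $c(f)$ forces $\Ann_R(c(f))\neq(0)$, a contradiction; hence $c(f)\not\subseteq Z(R)$, and by the dictionary (2) holds.

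For $(2)\Rightarrow(1)$, I would start with a finitely generated ideal $I\subseteq Z(R)$ and, using that $c\colon B\to\FId(R)$ is onto, choose $f\in B$ with $c(f)=I$. Then $c(f)$ is not a regular ideal of $R$, so by (2) $f$ is not a regular element of $B$; the zero-divisor characterization then provides $r\in R\setminus\{0\}$ with $rf=0$, whence $rI=r\,c(f)=c(rf)=(0)$. Thus $I$ has a nonzero annihilator and $R$ has Property (A).

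I do not expect a serious obstacle: once the dictionary is in place the argument is essentially a two-line translation in each direction, and all the substance sits in inputs already available --- faithful flatness and the characterization of the zero-divisors of a content algebra by scalars from $R$ --- which together linearize the problem into a question about the finitely generated ideals $c(f)\subseteq R$. The one hypothesis I must be careful not to drop is the surjectivity of $c$ onto $\FId(R)$ in the direction $(2)\Rightarrow(1)$: without it the implication fails, since $B=R$ satisfies (2) vacuously while $R$ need not have Property (A), exactly as in the remark following.
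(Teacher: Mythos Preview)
Your proposal is correct and follows essentially the same approach as the paper: both directions rest on the equivalence $rf=0\iff r\,c(f)=(0)$ (from faithful flatness and the content property), the McCoy-type characterization of zero-divisors in $B$, Property (A) applied to the finitely generated ideal $c(f)$, and surjectivity of $c$ for $(2)\Rightarrow(1)$. Your only difference is organizational---you factor out the ``dictionary'' $f\text{ regular in }B\iff\Ann_R(c(f))=(0)$ as a separate lemma before applying it, whereas the paper weaves these steps inline---but the logical content is identical.
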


\begin{theorem}
 Let $R$ have property (A) and $B$ be a content $R$-algebra. Then $Z(B)$ is a finite union of prime ideals in $\Min(B)$ iff $Z(R)$ is a finite union of prime ideals in $\Min(R)$.
\end{theorem}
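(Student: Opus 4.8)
The plan is to reduce both implications to a single dictionary relating the zero-divisors of $B$ to contents, used together with the bijection $\phi\colon\Min(R)\longrightarrow\Min(B)$, $\underline{p}\longmapsto\underline{p}B$, of the Corollary in Section~3 and the Prime Avoidance Theorem. The dictionary reads: for $g\in B$ one has $g\in Z(B)$ if and only if $c(g)\subseteq Z(R)$. Indeed, if $g\in Z(B)$ then the McCoy property recalled at the beginning of Section~3 yields a nonzero $r\in R$ with $rg=0$, so $rc(g)=c(rg)=(0)$ and hence $c(g)\subseteq Z(R)$. Conversely, if $c(g)\subseteq Z(R)$ then, since $c(g)$ is a finitely generated ideal (because $B$ is a content $R$-module), Property~(A) furnishes a nonzero $r\in R$ with $rc(g)=(0)$; then $c(rg)=rc(g)=(0)$, so $rg\in c(rg)B=(0)$, i.e. $g\in Z(B)$. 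I would also record two routine facts: $c(x)\subseteq I$ whenever $x\in IB$ (immediate from the definition of the content function), and $Z(R)=Z(B)\cap R$, where $Z(R)\subseteq Z(B)$ holds because $R\hookrightarrow B$ by faithful flatness and $Z(B)\cap R\subseteq Z(R)$ holds by applying McCoy to elements of $R$.

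For the implication from right to left, suppose $Z(R)=\underline{p_1}\cup\cdots\cup\underline{p_n}$ with each $\underline{p_i}\in\Min(R)$; I claim $Z(B)=\underline{p_1}B\cup\cdots\cup\underline{p_n}B$. If $g\in Z(B)$, the dictionary gives $c(g)\subseteq Z(R)=\bigcup_i\underline{p_i}$, so Prime Avoidance yields $c(g)\subseteq\underline{p_i}$ for some $i$, and then $g\in c(g)B\subseteq\underline{p_i}B$. Conversely, if $g\in\underline{p_i}B$ then $c(g)\subseteq\underline{p_i}\subseteq Z(R)$, so $g\in Z(B)$ by the dictionary. Since each $\underline{p_i}$ lies in $\Min(R)$, the Corollary of Section~3 gives $\underline{p_i}B=\phi(\underline{p_i})\in\Min(B)$, so $Z(B)$ is exhibited as a finite union of members of $\Min(B)$.

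For the converse, suppose $Z(B)=Q_1\cup\cdots\cup Q_m$ with each $Q_j\in\Min(B)$. As $\phi$ is a bijection, $Q_j=\underline{q_j}B$ with $\underline{q_j}=Q_j\cap R\in\Min(R)$. Intersecting the decomposition with $R$ and using $Z(R)=Z(B)\cap R$ gives $Z(R)=\bigcup_j(Q_j\cap R)=\bigcup_j\underline{q_j}$, a finite union of members of $\Min(R)$. The only delicate point, and the step I would flag as the crux, is the backward half of the dictionary, i.e. deducing $g\in Z(B)$ from $c(g)\subseteq Z(R)$: this is exactly where Property~(A) enters the argument (it is used nowhere else) and where the finite generation of $c(g)$ for content modules is essential. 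Everything else is formal bookkeeping with the content-algebra axioms, the $\Min$-correspondence of Section~3, and Prime Avoidance.
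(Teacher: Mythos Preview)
Your proof is correct and follows exactly the route the paper intends: its own proof is merely the pointer ``similar to the proof of Theorem~20 by considering Theorem~16,'' and you have faithfully unpacked that---the McCoy property and Property~(A) give the dictionary $g\in Z(B)\Leftrightarrow c(g)\subseteq Z(R)$, Prime Avoidance handles the forward containment, and the $\Min$-bijection of Corollary~17 replaces the $\Ass$-correspondence used in Theorem~20. Your treatment of the converse direction via $Z(R)=Z(B)\cap R$ and the surjectivity of $\phi$ is the natural completion of what the paper leaves implicit.
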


\begin{proof}
 The proof is similar to the proof of Theorem \ref{vfzd} by considering Theorem \ref{min}.
\end{proof}

Please note that if $R$ is a Noetherian reduced ring, then $Z(R)$ is a finite union of prime ideals in $\Min(R)$ (Refer to \cite[Theorem 88, p. 59]{K} and \cite[Corollary 2.4]{H}). Now we generalize the definition of rings having very few zero-divisors in the following way and prove the \textit{monoid module} version of the above theorem.

\begin{definition}
  An $R$-module $M$ has \textit{very few zero-divisors}, if $Z_R(M)$ is a finite union of prime ideals in $\Ass_R(M)$.
 \end{definition}

\begin{remark}
 \textit{Examples of modules having very few zero-divisors}. If $R$ is a Noetherian ring and $M$ is an $R$-module such that $\Ass_R(M)$ is finite, then obviously $M$ has very few zero-divisors. For example $\Ass_R(M)$ is finite if $M$ is a finitely generated $R$-module \cite[p. 55]{K}. Also if $R$ is a Noetherian quasi-local ring and $M$ is a balanced big Cohen-Macaulay $R$-module, then $\Ass_R(M)$ is finite \cite[Proposition 8.5.5, p. 344]{BH}.
\end{remark}

\begin{theorem}
 Let $R$-module $M$ have very few zero-divisors. If $S$ is a commutative, cancellative, torsion-free monoid then the $R[S]$-module $M[S]$ has very few zero-divisors also.
\end{theorem}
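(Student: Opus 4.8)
The plan is to imitate the proof of Theorem 16, with the monoid-algebra McCoy property (part (3) of the Remark following Theorem 9) and the Dedekind--Mertens content formula for $M[S]$ playing the role that the content-algebra hypotheses on $B$ played there. First I would record the inputs: since $S$ is commutative, cancellative and torsion-free, $B:=R[S]$ is a content $R$-algebra by Theorem 9, and the McCoy property for the pair $(R[S],M[S])$ holds, i.e. if $f\in B$ and $g\in M[S]\setminus\{0\}$ satisfy $fg=0$, then $f\cdot m=0$ for some $m\in M\setminus\{0\}$. I would also note the elementary fact that for $h=\sum_j a_jX^{s_j}\in B$ (the $s_j$ distinct) and $m\in M$, the product $h\cdot m\in M[S]$ equals $0$ if and only if $a_jm=0$ for all $j$, that is, if and only if $c(h)m=0$; equivalently $\{h\in B:c(h)m=0\}=\Ann_R(m)\cdot B$.

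Next, write $Z_R(M)=\underline{p_1}\cup\cdots\cup\underline{p_n}$ with each $\underline{p_i}\in\Ass_R(M)$, which is possible by hypothesis. The heart of the argument is the identity
$$ Z_{R[S]}(M[S]) = \underline{p_1}B \cup \cdots \cup \underline{p_n}B . $$
For the inclusion $\subseteq$: given $h\in Z_{R[S]}(M[S])$, choose $g\in M[S]\setminus\{0\}$ with $hg=0$; McCoy produces $m\in M\setminus\{0\}$ with $h\cdot m=0$, hence $c(h)m=0$, so $c(h)\subseteq Z_R(M)=\bigcup_i\underline{p_i}$, and the Prime Avoidance Theorem gives $c(h)\subseteq\underline{p_i}$ for some $i$, i.e. $h\in\underline{p_i}B$. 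For $\supseteq$: if $h\in\underline{p_i}B$ then $c(h)\subseteq\underline{p_i}$; since $\underline{p_i}\in\Ass_R(M)$, pick $m\in M$ with $\Ann_R(m)=\underline{p_i}$, and then $c(h)m=0$, so $h\cdot m=0$ while $m$ is a nonzero (constant) element of $M[S]$, whence $h\in Z_{R[S]}(M[S])$.

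Finally I would check that each $\underline{p_i}B$ actually lies in $\Ass_{R[S]}(M[S])$. Taking $m\in M$ with $\Ann_R(m)=\underline{p_i}$ as above and viewing $m$ as a constant in $M[S]$, the computation in the first paragraph gives $\Ann_{R[S]}(m)=\{h\in B:c(h)\subseteq\underline{p_i}\}=\underline{p_i}B$. Moreover $\underline{p_i}B$ is a prime ideal of $B$: a content $R$-algebra is a faithfully flat weak content algebra, so prime ideals of $R$ extend to prime ideals of $B$ (and $\underline{p_i}B\neq B$ since $c(1_B)=R\not\subseteq\underline{p_i}$). Hence $\underline{p_i}B=\Ann_{R[S]}(m)\in\Ass_{R[S]}(M[S])$, and the displayed identity exhibits $Z_{R[S]}(M[S])$ as a finite union of primes in $\Ass_{R[S]}(M[S])$, which is exactly the claim.

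I expect the only genuinely delicate point to be the two set-theoretic inclusions in the displayed identity — in particular applying the Remark's McCoy property in the correct direction and consistently translating between the conditions $h\cdot m=0$, $c(h)m=0$, and $c(h)\subseteq\Ann_R(m)$. Everything else (the prime-avoidance step, the extension-of-primes step, and the bookkeeping with $\Ass$) parallels the proof of Theorem 16 and requires no new ideas.
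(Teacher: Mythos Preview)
Your proof is correct and is precisely the module-theoretic analogue the paper intends: the paper's own proof simply reads ``The proof is similar to the proof of Theorem 20,'' and you have carried out exactly that translation, using the monoid-module McCoy property from Remark 14, prime avoidance, and the computation $\Ann_{R[S]}(m)=\underline{p_i}B$ to show $\underline{p_i}B\in\Ass_{R[S]}(M[S])$. (Your cross-references to ``Theorem 16'' and ``Theorem 9'' are off relative to the paper's shared counter --- these are Theorem 20 and Theorem 13/Remark 14 --- but the mathematics is right.)
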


\begin{proof}
 The proof is similar to the proof of Theorem \ref{vfzd}.
\end{proof}

\section{Gaussian and Armendariz Algebras}

\begin{definition}
 Let $B$ be an $R$-algebra that is a content $R$-module. $B$ is said to be a \textit{Gaussian $R$-algebra} if $c(fg) = c(f)c(g)$, for all $f,g \in B$.
\end{definition}

For example if $B$ is a content $R$-algebra such that every nonzero finitely generated ideal of $R$ is cancellation ideal of $R$, then $B$ is a Gaussian $R$-algebra. Another example is given in the following remark:

\begin{remark}
 Let $(R,\underline{m})$ be a quasi-local ring with $\underline{m}^2 = (0)$. If $B$ is a content $R$-algebra, then $B$ is a Gaussian $R$-algebra.

\begin{proof}
 Let $f,g \in B$ such that $c(f) \subseteq \underline{m}$ and $c(g) \subseteq \underline{m}$, then $c(fg) = c(f)c(g) = (0)$, otherwise one of them, say $c(f)$, is $R$ and according to Dedekind-Mertens content formula, we have $c(fg) = c(g) = c(f)c(g)$.
\end{proof}

\end{remark}

\begin{theorem}
 Let $M$ be an $R$-module such that every finitely generated $R$-submodule of $M$ is cyclic and $S$ be a commutative, cancellative, torsion-free monoid. Then for all $f\in R[S]$ and $g \in M[S]$, $c(fg) = c(f)c(g)$.
\end{theorem}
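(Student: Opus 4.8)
The plan is to pass through the isomorphism $M[S]\cong M\otimes_R R[S]$ of $R[S]$-modules in order to factor $g$ and reduce everything to a content computation inside the monoid algebra $R[S]$, which is a content $R$-algebra because $S$ is cancellative and torsion-free (Theorem 14). The Dedekind--Mertens content formula for $R[S]$ will then do the work, and the cyclicity hypothesis on $M$ will be used exactly once, to make the content of $g$ cyclic.

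First I would fix $g=\sum_{i=0}^{n}m_i s_i\in M[S]$ with the $s_i\in S$ distinct. Then $c(g)=(m_0,\dots,m_n)$ is a finitely generated $R$-submodule of $M$, hence cyclic by hypothesis, say $c(g)=Rm$. Writing $m_i=r_i m$ with $r_i\in R$ and setting $h:=\sum_{i=0}^{n}r_i s_i\in R[S]$, the isomorphism $M[S]\cong M\otimes_R R[S]$ identifies $g$ with $m\otimes h$ and therefore $fg$ with $m\otimes(fh)$. Expanding these tensors back into $M[S]$ yields the bookkeeping identities $c(g)=c(h)m=Rm$ and $c(fg)=c(fh)m$, where $c(h)$ and $c(fh)$ now denote the usual contents (ideals of $R$) in the content algebra $R[S]$. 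From $c(h)m=Rm$ an immediate induction gives $c(h)^{j}m=Rm$ for every $j\ge 0$.

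Next I would apply the Dedekind--Mertens content formula for $R[S]$ to the pair $h,f$, \emph{in that order}: there is a natural number $k$ with $c(h)^{k}c(f)=c(h)^{k-1}c(fh)$. Multiplying this equality of ideals of $R$ by the element $m$, and using the elementary absorption rule $(IJ)m=I(Jm)$ together with $c(h)^{j}m=Rm$, the left-hand side collapses to $c(f)(Rm)=c(f)m$ and the right-hand side to $c(fh)(Rm)=c(fh)m$, so that $c(f)m=c(fh)m$. Chaining this with the identities of the previous paragraph,
$$c(fg)=c(fh)m=c(f)m=c(f)(Rm)=c(f)c(g),$$
which is the assertion.

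The only genuine points are (a) noticing that $M[S]=M\otimes_R R[S]$ lets one write $g=m\otimes h$ with $h\in R[S]$, thereby transporting the cyclicity hypothesis into a statement living entirely in $R[S]$, and (b) running Dedekind--Mertens with $h$, not $f$, in the slot whose powers proliferate, since $c(h)m=Rm$ is precisely what annihilates those powers after multiplication by $m$; this is what delivers the non-obvious inclusion $c(f)c(g)\subseteq c(fg)$ (the reverse inclusion being automatic from the definition of the product). Everything else — the identities $c(g)=c(h)m$ and $c(fg)=c(fh)m$, and the routine facts $(IJ)m=I(Jm)$ and $c(h)(Rm)=c(h)m$ — is bookkeeping, and in particular the argument needs neither any localization of contents nor the monoid-module Dedekind--Mertens of the earlier Remark.
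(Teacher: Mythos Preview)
Your argument is correct. Both your proof and the paper's factor $g$ through a single generator $m$ of the cyclic module $c(g)$ and then reduce to the Dedekind--Mertens formula in the content $R$-algebra $R[S]$, but the mechanics differ at one point. The paper, having written $m_i=r_i m$ and $m=\sum s_i m_i$, observes that $m=dm$ for $d=\sum s_i r_i$, picks a fresh monoid element $g_{n+1}$ (using that a nontrivial cancellative torsion-free monoid is infinite), and sets $g'=\sum r_i g_i+(1-d)g_{n+1}$; this forces $c(g')=R$ on the nose, so Dedekind--Mertens collapses immediately to $c(fg')=c(f)$ and the computation $c(fg)=c(fg')m=c(f)m=c(f)c(g)$ finishes. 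You instead keep $h=\sum r_i s_i$ without modification, accept that $c(h)$ need not be all of $R$, and use only the weaker fact $c(h)m=Rm$ to absorb the powers $c(h)^j$ after multiplying the Dedekind--Mertens identity by $m$. Your route is a little cleaner in that it avoids the auxiliary term and the appeal to $|S|=\infty$ (and hence handles the trivial monoid uniformly); the paper's route has the advantage that once $c(g')=R$ the Dedekind--Mertens step is literally $c(fg')=c(f)$ with no bookkeeping about $(IJ)m=I(Jm)$.
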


\begin{proof}
 Let $g \in M[S]$ such that $g = m_1g_1+m_2g_2+\cdots+m_ng_n$, where $m_1, m_2,\ldots,m_n \in M$ and $g_1, g_2,\ldots,g_n \in S$. Then there exists an $m \in M$, such that $c(g) = (m_1, m_2,\ldots,m_n) = (m)$. From this, we can get $m_i = r_im$ and $m = \sum s_im_i$, where $r_i, s_i \in R$. Put $d = \sum s_ir_i$, then $m = dm$. Since $S$ is an infinite set, it is possible to choose $g_{n+1} \in S- \lbrace g_1,g_2,\ldots,g_n \rbrace $ and put $g^ \prime = r_1g_1+r_2g_2+\cdots+r_ng_n+(1-d)g_{n+1}$. One can easily check that $g =g^\prime m$ and $c(g^ \prime) =R$ and
$c(fg) = c(fg^ \prime m) = c(fg^ \prime)m =c(f)m = c(f)c(g). $
\end{proof}

\begin{corollary}
Let $R$ be a ring such that every finitely generated ideal of $R$ is principal and $S$ be a commutative, cancellative, torsion-free monoid. Then $R[S]$ is a Gaussian $R$-algebra.
\end{corollary}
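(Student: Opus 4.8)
The plan is to deduce this immediately from the preceding Theorem by specializing the module $M$ to be $R$ itself, regarded as a module over itself. First I would observe that the $R$-submodules of $R$ are precisely the ideals of $R$, so the finitely generated $R$-submodules of $R$ are exactly the finitely generated ideals of $R$; by hypothesis these are all principal, i.e.\ cyclic. Hence $M = R$ satisfies the hypothesis of the previous Theorem, and since $M[S] = R[S]$ as $R[S]$-modules (with the content of an element of $R[S]$ computed as the ideal generated by its coefficients, which matches the module content), that Theorem yields $c(fg) = c(f)c(g)$ for all $f, g \in R[S]$.

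It then remains to check that $R[S]$ is a content $R$-module, which is the standing hypothesis in the definition of a Gaussian $R$-algebra. This is supplied by the earlier theorem characterizing monoid content algebras: since $S$ is commutative, cancellative and torsion-free, $R[S]$ is a content $R$-algebra, and in particular a content $R$-module. Combining this with the content multiplicativity established in the previous paragraph shows that $R[S]$ is a Gaussian $R$-algebra, as claimed.

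There is essentially no obstacle here; the statement is a direct specialization of the preceding Theorem. The only points requiring any care are the identification of the finitely generated $R$-submodules of $R$ with the finitely generated ideals of $R$, and the appeal to the monoid content algebra theorem to secure the ``content $R$-module'' clause of the definition, neither of which needs further argument.
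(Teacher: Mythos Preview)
Your proposal is correct and follows exactly the paper's intended approach: the corollary is stated without proof and is meant to be the immediate specialization $M=R$ of the preceding theorem. Your additional care in invoking the earlier monoid-algebra theorem to confirm that $R[S]$ is a content $R$-module (so that the definition of a Gaussian $R$-algebra applies) is appropriate and is implicit in the paper's presentation.
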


For more about content formulas for polynomial modules, refer to \cite{NY} and \cite{AK}.

$ $

In the next step, we define Armendariz algebras and show their relationships with Gaussian algebras. Armendariz rings were introduced in \cite{RC}. A ring $R$ is said to be an Armendariz ring if for all $f,g \in R[X]$
with $f = a_0+a_1X+\cdots+a_nX^n$ and $g = b_0+b_1X+\cdots+b_mX^m$, $fg = 0$ implies $a_ib_j = 0$, for all $0 \leq i \leq n$ and $0 \leq j \leq m$. This is equivalent to say that if $fg = 0$, then $c(f)c(g) = 0$ and our inspiration to define Armendariz algebras.

\begin{definition}
 Let $B$ be an $R$-algebra such that it is a content $R$-module. We say $B$ is an \textit{Armendariz $R$-algebra} if for all $f,g \in B$, if $fg = 0$, then $c(f)c(g) = (0) $.
\end{definition}

For example if $B$ is a weak content $R$-algebra and $R$ is a reduced ring, then $B$ is an Armendariz $R$-algebra.

\begin{theorem}
 Let $R$ be a ring and $(0)$ a $\underline{p}$-primary ideal of $R$ such that $\underline{p}^2 = (0)$ and $B$ a content $R$-algebra. Then $B$ is an Armendariz $R$-algebra.

\begin{proof}
 Let $f,g \in B$, where $fg = 0$. If $f = 0$ or $g = 0$, then definitely $c(f)c(g) = 0$, otherwise suppose that $f \not= 0$ and $g \not= 0$, therefore $f$ and $g$ are both zero-divisors of $B$. Since $(0)$ is a $\underline{p}$-primary ideal of $R$, so $(0)$ is a $\underline{p}B$-primary ideal of $B$ \cite[R, p. 331]{R} and therefore $\underline{p}B$ is the set of zero-divisors of $B$. So $f,g \in \underline{p}B$ and this means that $c(f) \subseteq \underline{p}$ and $c(g) \subseteq \underline{p}$. Finally $c(f)c(g) \subseteq \underline{p}^2 = (0)$.
\end{proof}

\end{theorem}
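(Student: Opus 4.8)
The plan is to reduce the Armendariz condition to a statement about the zero-divisors of $B$ and then let the hypothesis $\underline{p}^2=(0)$ act directly on contents. First I would dispose of the degenerate case: if $f=0$ or $g=0$ then $c(f)c(g)=(0)$ trivially, so from now on assume $f\neq 0$ and $g\neq 0$. The equation $fg=0$ then exhibits both $f$ and $g$ as zero-divisors of $B$, and the whole problem is to locate these zero-divisors inside $\underline{p}B$.

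The second step is to pin down the zero-divisors of $B$. Since $(0)$ is $\underline{p}$-primary, the ideal $\underline{p}=\rad((0))$ is prime and proper; as $B$ is a content $R$-algebra it is in particular a faithfully flat weak content $R$-algebra, so $\underline{p}B\neq B$, and by Rush's result on the extension of primary ideals in weak content algebras [R, p.~331] the ideal $(0)$ is $\underline{p}B$-primary in $B$. In any nonzero ring whose zero ideal is primary, every zero-divisor lies in the radical of $(0)$: if $h\in B$ is a nonzero zero-divisor, choose $h'\neq 0$ with $hh'=0$; then $hh'\in(0)$, $h'\notin(0)$, and primariness forces $h\in\rad((0))=\underline{p}B$. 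Applying this to $f$ and to $g$ yields $f\in\underline{p}B$ and $g\in\underline{p}B$.

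The final step passes from membership in $\underline{p}B$ back to contents. By the definition of the content function, $c(f)=\bigcap\{I : f\in IB\}$, and $\underline{p}$ is one of the ideals $I$ appearing in this intersection, so $c(f)\subseteq\underline{p}$; likewise $c(g)\subseteq\underline{p}$. Hence $c(f)c(g)\subseteq\underline{p}^{2}=(0)$, i.e.\ $c(f)c(g)=(0)$, which is precisely the defining property of an Armendariz $R$-algebra.

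I do not expect a genuine obstacle here. The only non-formal ingredient is the quoted fact that primary ideals of $R$ extend to primary ideals of $B$ in a weak content algebra; granting that, the argument is short. The point that deserves a moment's care is the middle step, namely that once $(0)$ is primary in $B$ every zero-divisor of $B$ actually sits in $\underline{p}B$ — this is exactly what allows the hypothesis $\underline{p}^{2}=(0)$ to kill the product $c(f)c(g)$.
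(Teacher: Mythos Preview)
Your proposal is correct and follows essentially the same route as the paper's own proof: dispose of the trivial case, use Rush's result that $(0)$ remains $\underline{p}B$-primary in $B$, conclude that the zero-divisors $f$ and $g$ lie in $\underline{p}B$, pass to contents, and kill the product with $\underline{p}^2=(0)$. Your write-up is in fact a bit more explicit than the paper's in justifying why zero-divisors of $B$ land in $\underline{p}B$ and in noting that $\underline{p}B\neq B$, but the underlying argument is identical.
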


In order to characterize Gaussian algebras in terms of Armendariz algebras, we should mention the following useful remark.

\begin{remark}
 Let $R$ be a ring and $I$ an ideal of $R$. If $B$ is a Gaussian $R$-algebra then $B/IB$ is a Gaussian $(R/I)$-algebra also.
\end{remark}

\begin{theorem}
 Let $B$ be a content $R$-algebra. Then $B$ is a Gaussian $R$-algebra iff for any ideal $I$ of $R$, $B/IB$ is an Armendariz $(R/I)$-algebra.

\begin{proof}
 $ ( \rightarrow ): $ According to the above remark, since $B$ is a Gaussian $R$-algebra, $B/IB$ is a Gaussian $(R/I)$-algebra and obviously any Gaussian algebra is an Armendariz algebra and this completes the proof.

$ ( \leftarrow ): $ One can easily check that if $B$ is an algebra such that it is a content $R$-module, then for all $f,g \in B$, $c(fg) \subseteq c(f)c(g)$ \cite[Proposition 1.1, p. 330]{R}. Therefore we need to prove that $c(f)c(g) \subseteq c(fg)$. Put $I = c(fg)$, since $B/IB$ is an Armendariz $(R/I)$-algebra and $c(fg+IB) = I$ so $c(f+IB)c(g+IB) = I$ and this means that $c(f)c(g) \subseteq c(fg)$.
\end{proof}

\end{theorem}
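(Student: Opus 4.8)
The plan is to treat the two implications separately; the only substantive point is a small lemma recording how the content function behaves when one passes from $R$ to a residue ring $R/I$ and from $B$ to $B/IB$.

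For the implication $(\Rightarrow)$ I would argue as follows. Assume $B$ is Gaussian over $R$. By the Remark immediately preceding the statement, $B/IB$ is a Gaussian $(R/I)$-algebra for every ideal $I$ of $R$, so it suffices to note that every Gaussian algebra is Armendariz: if $uv=0$ in such an algebra, then by the Gaussian equality $c(u)c(v) = c(uv) = c(0) = (0)$. Hence $B/IB$ is an Armendariz $(R/I)$-algebra for each $I$.

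For the implication $(\Leftarrow)$ I would first recall, as in [R, Proposition 1.1], that any algebra which is a content $R$-module satisfies $c(fg) \subseteq c(f)c(g)$; thus only the reverse inclusion $c(f)c(g) \subseteq c(fg)$ needs proof. Fix $f,g \in B$ and set $I = c(fg)$, and write $\pi \colon R \to R/I$ for the quotient map. The key lemma is that for every $h \in B$ one has $c_{R/I}(h+IB) = \pi(c_R(h))$ (and consequently $B/IB$ is a content $(R/I)$-module). Granting this, $c_{R/I}(fg + IB) = \pi(c_R(fg)) = \pi(I) = (0)$, so $fg + IB \in c_{R/I}(fg+IB)\cdot(B/IB) = (0)$, i.e. $(f+IB)(g+IB) = 0$ in $B/IB$. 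Applying the Armendariz hypothesis for the ideal $I$ gives $c_{R/I}(f+IB)\,c_{R/I}(g+IB) = (0)$, which reads $\pi(c_R(f)c_R(g)) = (0)$, i.e. $c_R(f)c_R(g) \subseteq I = c_R(fg)$, as wanted.

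It remains to establish the lemma, and this bookkeeping with extended ideals is the only (rather minor) obstacle. For $c_{R/I}(h+IB) \subseteq \pi(c_R(h))$: since $B$ is a content $R$-module, $h \in c_R(h)B$, hence $h + IB \in \pi(c_R(h))(B/IB)$. For the reverse inclusion: if $h + IB \in (J/I)(B/IB)$ for some ideal $J$ with $I \subseteq J \subseteq R$, then $h \in JB + IB = JB$, so $c_R(h) \subseteq J$ by the definition of the content function, whence $\pi(c_R(h)) \subseteq J/I$; intersecting over all such $J$ gives $\pi(c_R(h)) \subseteq c_{R/I}(h+IB)$. The two inclusions yield the formula, and then $h \in c_R(h)B$ shows $h + IB \in c_{R/I}(h+IB)(B/IB)$, so $B/IB$ is a content $(R/I)$-module. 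Note that faithful flatness of $B$ is not used here beyond what is already built into "$B$ is a content $R$-module"; everything else is formal.
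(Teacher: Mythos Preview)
Your proof is correct and follows the same route as the paper: the forward direction via the preceding Remark, and the backward direction by setting $I=c(fg)$ and invoking the Armendariz hypothesis in $B/IB$. The only difference is that you make explicit (and carefully verify) the compatibility $c_{R/I}(h+IB)=\pi(c_R(h))$, which the paper uses tacitly when it writes ``$c(fg+IB)=I$''; your added lemma is exactly the bookkeeping the paper suppresses.
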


For more about Armendariz and Gaussian rings, one can refer to \cite{AC}.

\section{Nilradical and Jacobson radical of content algebras}

\begin{definition}
 A ring $R$ is said to be \textit{domainlike} if any zero-divisor of $R$ is nilpotent, i.e. $Z(R) \subseteq \Nil(R)$ \cite[Definition 9]{AAFS}.
\end{definition}

\begin{theorem}
 If $B$ is a content $R$-algebra, then $R$ is domainlike iff $B$ is domainlike.
\end{theorem}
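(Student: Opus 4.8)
The plan is to prove both implications directly from the definition ``$R$ domainlike $\iff Z(R)\subseteq\Nil(R)$'', using the McCoy-type description of $Z(B)$ available for content algebras ($g$ is a zero-divisor of $B$ iff $rg=0$ for some $r\in R-\lbrace 0\rbrace$, [OR, 6.1, p. 63]) together with two elementary observations. First, since $B$ is a faithfully flat content $R$-module, $c(1_B)=R$, so for every $r\in R$ axiom (2) of a content algebra gives $c(r)=c(r\cdot 1_B)=r\,c(1_B)=(r)$; in particular if $r\cdot 1_B=0$ then $(r)=c(0)=(0)$, so the structure map $R\to B$ is injective. Second, since $B$ is a content $R$-module we have $g\in c(g)B$ for every $g\in B$, and because $c(g)B$ is an ideal of the ring $B$, induction on $m$ yields $g^m\in (c(g)B)^m=c(g)^m B$; hence if the (finitely generated) ideal $c(g)$ is nilpotent, say $c(g)^k=(0)$, then $g^k\in c(g)^kB=(0)$, i.e. $g$ is nilpotent. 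Note that the Dedekind--Mertens formula is not needed for this.

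For $(\Rightarrow)$, assume $Z(R)\subseteq\Nil(R)$ and take $g\in Z(B)$. By [OR, 6.1, p. 63] there is $r\in R-\lbrace 0\rbrace$ with $rg=0$, so $r\,c(g)=c(rg)=c(0)=(0)$ and therefore $c(g)\subseteq\Ann_R(r)\subseteq Z(R)\subseteq\Nil(R)$. Since $c(g)$ is a finitely generated ideal of $R$ contained in the nilradical, $c(g)^k=(0)$ for some $k$, and by the second observation $g^k=0$. Thus $Z(B)\subseteq\Nil(B)$, i.e. $B$ is domainlike.

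For $(\Leftarrow)$, assume $Z(B)\subseteq\Nil(B)$ and take $r\in Z(R)$, so $sr=0$ for some $s\in R-\lbrace 0\rbrace$. By injectivity of $R\to B$ the image of $s$ in $B$ is nonzero, and it annihilates the image of $r$, so $r\in Z(B)\subseteq\Nil(B)$; hence $r^k=0$ in $B$ for some $k$, and again by injectivity $r^k=0$ in $R$. Therefore $r\in\Nil(R)$, and $R$ is domainlike.

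I expect no serious obstacle here; the only points requiring a little care are the passage $g^m\in c(g)^mB$ (which must be derived from the content-\emph{module} property, not from Dedekind--Mertens) and the harmless remark that a zero-divisor of $R$ stays a zero-divisor of $B$, which rests on the injectivity of the structure map. As an alternative to the power-trick in the first observation one could instead prove $\Nil(B)=\Nil(R)B$ by combining the bijection $\Min(R)\to\Min(B)$ of Corollary 13 with the content-module identity $\bigcap(I_iB)=(\bigcap I_i)B$, and argue through that; but the route above is shorter and uses less machinery.
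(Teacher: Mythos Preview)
Your argument is correct. The two observations you isolate (injectivity of $R\to B$ via $c(r)=(r)$, and $g^m\in c(g)^mB$ from the content-module axiom) are sound, and together with the McCoy property they yield both directions cleanly.

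The paper takes a different and shorter route: it invokes the equivalence ``$R$ is domainlike iff $(0)$ is a primary ideal of $R$'' [AFS, Lemma~10], and then appeals to the known fact that for a content $R$-algebra $B$ an ideal $q$ is $p$-primary in $R$ iff $qB$ is $pB$-primary in $B$ [R, p.~331]; applying this with $q=(0)$ gives the result in one line. So the paper reduces the statement to a black-box correspondence of primary ideals, whereas you unwind things directly at the level of elements. Your approach is more self-contained and makes explicit exactly which axioms are used (indeed, as you note, Dedekind--Mertens is never invoked for $(\Rightarrow)$ beyond what is packaged into the McCoy property), while the paper's approach is terser and situates the result as a special case of a general primary-ideal transfer principle.
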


\begin{proof}
 The ring $R$ is domainlike iff $(0)$ is a primary ideal of $R$ \cite[Lemma 10]{AAFS}. Also it is easy to prove that if $B$ is a content $R$-algebra, then $q$ is a $p$-primary ideal of $R$ iff $qB$ is a $pB$-primary ideal of $B$ [R, p. 331].
\end{proof}

In a similar way one can see:
\begin{remark}
 Let $S$ be a commutative, cancellative and torsion-free monoid and $M$ be an $R$-module. Then $Z_R(M) \subseteq \Nil(R)$ iff $Z_{R[S]} (M[S]) \subseteq \Nil(R[S])$.
\end{remark}

\begin{remark}
 If $B$ is a weak content $R$-algebra, then $\Nil(B) = \Nil(R)B$, particularly $R$ is a reduced ring iff $B$ is a reduced ring.
\end{remark}

\begin{proof}
 It is obvious that $\Nil(R)B \subseteq \Nil(B)$. Also it is easy to prove that for all $f \in B$ and natural number $n$, we have $c(f)^n \subseteq \text{rad}(c(f^n))$ and therefore if $f \in B$ is nilpotent, then $c(f) \subseteq \Nil(R)$ and consequently $f \in \Nil(R)B$.
\end{proof}

\begin{definition}
A ring $R$ is called \textit{presimplifiable} if any zero-divisor of $R$ is a member of the Jacobson radical of $R$, i.e. $ Z(R) \subseteq \Jac(R) $.
\end{definition}

For more about presimplifiable rings, one can refer to \cite{AAFS}. In the following, our aim is show when some of the content algebras are presimplifiable. For doing that, we need to know about localization of content algebras that have been discussed in \cite[Section 3, p. 56-58]{OR}. Actually we are interested in the following special case of localization:

Let $B$ be a content $R$-algebra and $S^ \prime = \lbrace f \in B \colon c(f) = R \rbrace$. It is easy to check that $ S^ \prime = B-\bigcup_{\underline{m}\in \Max(R)} \underline{m}B $ and $S = S^ \prime \cap R = U(R)$, where by $U(R)$, we mean the units of $R$. According to \cite[Theorem 6.2, p. 64]{OR}, it is clear that $B_{S^ \prime} $ is also a content $R$-algebra and $B$ is a subring of $B_{S^ \prime}$. This special content $R$-algebra has some interesting properties:

\begin{theorem}

 Let $B$ be a content $R$-algebra such that $S^ \prime = \lbrace f \in B \colon c(f) = R \rbrace$ and put $R^ \prime = B_{S^ \prime} $, then the following statements hold:
\begin{enumerate}
 \item The map $\phi: \Max(R) \longrightarrow \Max(R^ \prime)$, defined by $I \longrightarrow IR^ \prime$ is a bijection.
\item $\Jac(R^ \prime) = \Jac(R)R^ \prime$.
\item $U(R^ \prime) = \lbrace f/g \colon c(f)=c(g)=R \rbrace$
\item The ring $R^ \prime$ is presimplifiable iff $R$ is presimplifiable.
\item The ring $R^ \prime$ is $0$-dimensional iff $R$ is $0$-dimensional.
\end{enumerate}

\end{theorem}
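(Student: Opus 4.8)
The plan is to establish part~(1) first, since it carries essentially all of the difficulty, and then to deduce (2)--(5) from it by formal arguments. Recall, as noted just before the statement, that $S^\prime=B\setminus\bigcup_{\underline m\in\Max(R)}\underline m B$, that $S^\prime\cap R=U(R)$, and that by [OR, Theorem 6.2] $R^\prime=B_{S^\prime}$ is again a faithfully flat content $R$-algebra; in particular $c(1_{R^\prime})=R$, so $c(r)=rc(1_{R^\prime})=(r)$ for $r\in R$ and hence $IR^\prime\cap R=I$ for every ideal $I$ of $R$. I would begin with a Key Lemma: for each $\underline m\in\Max(R)$, $\underline m R^\prime\cap B=\underline m B$ and $\underline m R^\prime$ is a proper prime ideal of $R^\prime$. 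For the first part, if $f\in\underline m R^\prime\cap B$ then $tf\in\underline m B$ for some $t\in S^\prime$, so $c(tf)\subseteq\underline m$; applying the Dedekind--Mertens formula in $B$ to $t$ and $f$ and using $c(t)=R$ yields $c(f)=c(tf)\subseteq\underline m$, i.e. $f\in\underline m B$ (the reverse inclusion being trivial). Since $\underline m B\subseteq\bigcup_{\underline n}\underline n B$ we get $\underline m B\cap S^\prime=\emptyset$, so $\underline m R^\prime$ is proper, and it is prime because $\underline m B$ is prime in the content algebra $B$ and survives the localization. I would also record the dichotomy: writing $x\in R^\prime$ as $x=f/g$ with $c(g)=R$, either $c(f)=R$ and then $f\in S^\prime$, so $x\in U(R^\prime)$, or $c(f)\subseteq\underline m$ for some $\underline m\in\Max(R)$ and then $x\in\underline m R^\prime$; hence $R^\prime\setminus U(R^\prime)=\bigcup_{\underline m\in\Max(R)}\underline m R^\prime$.

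With these in hand, part~(1) amounts to showing that the maximal ideals of $R^\prime$ are exactly the $\underline m R^\prime$ with $\underline m\in\Max(R)$ --- injectivity of $\phi$ being immediate from $\underline m R^\prime\cap R=\underline m$. The step I expect to be the main obstacle is the extension property
$$(\ast)\qquad\text{every prime }Q\text{ of }B\text{ with }Q\cap S^\prime=\emptyset\text{ satisfies }Q=(Q\cap R)B,$$
equivalently, every prime of $R^\prime$ has the form $\underline p R^\prime$ for a unique prime $\underline p$ of $R$. This is the content-algebra analogue of the corresponding fact for the Nagata-type ring $R(X)$, and my plan is to prove it by setting $\underline p=Q\cap R$ and passing to the content $(R/\underline p)$-algebra $B/\underline p B$, where one is left with a prime lying over $(0)$ and disjoint from the image of $S^\prime$; such a prime must be $(0)$, because any element outside it has a representative of content $R$ --- for a monoid algebra $B=R[S]$ this is precisely the ``adjust by a new monomial with coefficient in $\underline p$'' maneuver already used in Section~2, and in general it is furnished by the localization theory of content algebras in [OR, Section 3]. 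Granting $(\ast)$, the primes of $R^\prime$ are the $\underline p R^\prime$, $\underline p\in\Spec(R)$, and $\underline p R^\prime\subseteq\underline q R^\prime\iff\underline p\subseteq\underline q$ (contract and use $\underline p R^\prime\cap R=\underline p$); hence the maximal ones correspond bijectively to $\Max(R)$. Alternatively, one may check directly that $\underline m R^\prime$ is maximal by identifying $R^\prime/\underline m R^\prime\cong(B/\underline m B)_{\overline{S^\prime}}$ and observing that $B/\underline m B$ is a content algebra over the field $R/\underline m$, hence a domain, and that $\overline{S^\prime}$ is exactly its set of nonzero elements, so the quotient is a field; $(\ast)$ then rules out any further maximal ideals.

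From (1) the rest follows quickly. For (2): $\Jac(R^\prime)=\bigcap_{\underline M\in\Max(R^\prime)}\underline M=\bigcap_{\underline m\in\Max(R)}\underline m R^\prime$, and contracting to $B$, using the Key Lemma together with the fact that a content $R$-module carries an intersection of extended ideals to the extension of the intersection, gives $\bigl(\bigcap_{\underline m}\underline m R^\prime\bigr)\cap B=\bigcap_{\underline m}\underline m B=\bigl(\bigcap_{\underline m}\underline m\bigr)B=\Jac(R)B$; so for $x=f/g\in\bigcap_{\underline m}\underline m R^\prime$ one has $f=gx\in\Jac(R)B$ and hence $x\in\Jac(R)R^\prime$, while $\Jac(R)R^\prime\subseteq\bigcap_{\underline m}\underline m R^\prime$ is clear, whence $\Jac(R^\prime)=\Jac(R)R^\prime$. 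Part (3) is read off from the dichotomy: for $x=f/g\in R^\prime$ (so $c(g)=R$ automatically), $x\in U(R^\prime)$ iff $c(f)=R$ --- if $c(f)=R$ then $g/f$ inverts $x$, and if not then $x$ lies in the proper ideal $\underline m R^\prime$ --- so $U(R^\prime)=\{f/g:c(f)=c(g)=R\}$.

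For (4), recall that, $R^\prime$ being a content $R$-algebra, $Z(R^\prime)=\{f\in R^\prime:rf=0\text{ for some }r\in R\setminus(0)\}$ and $c(rf)=rc(f)$. If $R$ is presimplifiable and $f\in Z(R^\prime)$, pick $0\neq r\in R$ with $rf=0$; then $rc(f)=(0)$, so every generator of $c(f)$ lies in $Z(R)\subseteq\Jac(R)$, whence $c(f)\subseteq\Jac(R)$ and $f\in c(f)R^\prime\subseteq\Jac(R)R^\prime=\Jac(R^\prime)$ by (2), so $R^\prime$ is presimplifiable. Conversely, if $R^\prime$ is presimplifiable and $r\in Z(R)$, say $rs=0$ with $0\neq s\in R$, then $s\neq0$ in $R^\prime$ by faithful flatness, so $r\in Z(R^\prime)\subseteq\Jac(R^\prime)=\Jac(R)R^\prime$, and therefore $r\in\Jac(R)R^\prime\cap R=\Jac(R)$. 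Finally, for (5): if $\dim R^\prime=0$, then for each $\underline p\in\Spec(R)$ the (prime) ideal $\underline p R^\prime$ is maximal, hence equals $\underline m R^\prime$ for some $\underline m\in\Max(R)$ by (1), and contracting gives $\underline p=\underline m$; so $\dim R=0$. Conversely, if $\dim R=0$ and $\underline M\in\Spec(R^\prime)$, then $\underline M\cap R\in\Max(R)$, so $(\underline M\cap R)R^\prime$ is maximal in $R^\prime$ by (1), and $(\underline M\cap R)R^\prime\subseteq\underline M\subsetneq R^\prime$ forces $\underline M=(\underline M\cap R)R^\prime$; so $\dim R^\prime=0$.
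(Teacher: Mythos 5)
Your Key Lemma, the dichotomy $R^\prime\setminus U(R^\prime)=\bigcup_{\underline{m}}\underline{m}R^\prime$, injectivity of $\phi$, and the deductions of (2)--(5) from (1) are all fine (and in fact flesh out the paper's very terse proof, which settles (1) by citing [G1, 4.8], (2) by intersecting maximal ideals and using [OR, 1.2], and argues (3)--(5) much as you do). The genuine gap is the centerpiece of your proof of (1): the extension property $(\ast)$ is false, even for $B=R[X]$, $R^\prime=R(X)$, over very nice rings. Take $R=k[Y,Z]_{(Y,Z)}$ with maximal ideal $\underline{m}=(Y,Z)$, $K$ its quotient field, and $Q=(YX-Z)K[X]\cap R[X]$. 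If $g=\sum a_iX^i\in Q$ has degree $d$, then $\sum_i a_iZ^iY^{d-i}=0$, and since the syzygies of $(Y^d,Y^{d-1}Z,\dots,Z^d)$ are generated by relations with entries in $\underline{m}$, every $a_i$ lies in $\underline{m}$; hence $Q\subseteq\underline{m}[X]$, so $Q\cap S^\prime=\emptyset$, yet $Q\cap R=(0)$ and $Q\neq(0)$. So not every prime of $R^\prime$ is extended from $R$, and your proposed proof of $(\ast)$ breaks exactly here: after passing to $B/\underline{p}B$ you are left with a nonzero prime lying over $(0)$ all of whose elements have non-unit content, and nothing forces it to be $(0)$ (the ``new monomial'' maneuver only shows its elements have proper content, which is not a contradiction unless $R/\underline{p}$ is a field). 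Since your ``alternative'' route also invokes $(\ast)$ to rule out further maximal ideals, surjectivity of $\phi$ is left unproved.

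What you actually need is the weaker (and true) statement: every prime $Q$ of $B$ with $Q\cap S^\prime=\emptyset$ satisfies $c(Q):=\sum_{f\in Q}c(f)\neq R$, hence $Q\subseteq\underline{m}B$ for some $\underline{m}\in\Max(R)$; equivalently, every proper ideal of $R^\prime$ is contained in some $\underline{m}R^\prime$. Combined with your Key Lemma this gives $\Max(R^\prime)=\{\underline{m}R^\prime\}$ without any claim about intermediate primes. Proving $c(Q)\neq R$ is the real work: for monoid algebras it is the shifted-monomial trick (if $c(f_1)+\dots+c(f_k)=R$ with $f_i\in Q$, splice the $f_i$ into one element of $Q$ of content $R$), and for general content algebras it is precisely what the paper's appeal to [G1, 4.8] (ideals maximal with respect to missing the multiplicative set $S^\prime$) together with the localization theory of [OR, Section 3, Theorem 6.2] is meant to supply; your proposal does not furnish a substitute for this step. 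Everything downstream of (1) in your write-up is correct once (1) is repaired.
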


\begin{proof}
The first proposition is actually a special case of \cite[4.8]{G1}. For the proof of the second proposition, notice that the Jacobson radical of a ring is the intersection of all maximal ideals. Now use \cite[1.2, p. 51]{OR}.

It is obvious that if $c(f)=c(g)=R$, then $f/g$ is a unit of $R ^ \prime$. Now let $f/g$ be a unit of $R ^ \prime$, where $c(g)=R$ and assume that there exists a member of $R^ \prime$, say $f^ \prime/g^ \prime$ with $c(g^ \prime)=R$, such that $(f/g)\cdot (f^ \prime/g^ \prime) = 1/1$. According to McCoy's property for content algebras, $S^ \prime \subseteq B- Z_B(B)$. So $ff^\prime = gg^ \prime$ and $ff^\prime \in S^\prime$. This means that $f \in S^\prime$ and the proof of the third proposition is complete.

For the proof of the forth proposition, suppose $R$ is presimplifiable and let $f \in Z(R^ \prime)$. Therefore there exists a nonzero $r \in R$ such that $rf=0$ and so $rc(f)=(0)$. This means that $c(f) \subseteq Z(R)$. Since $R$ is presimplifiable, $c(f) \subseteq \Jac(R)$ and at last $f \in \Jac(R)R^ \prime$ and according to (2), $f \in \Jac(R^ \prime)$. It is easy to check that if $R^ \prime$ is presimplifiable then $R$ is presimplifiable also. For the proof of the fifth proposition note that a ring, say $T$, is $0$-dimensional iff $\Min(T)=\Max(T)$.
\end{proof}

\begin{theorem}
 Let $B$ be a content $R$-algebra with the property that if $f \in B$ with $c(f) = (a)$ where $a\in R$, then there exists an $f_1 \in B$ such that $f=af_1$ and $c(f_1) = R$ and put $S^ \prime = \lbrace f \in B \colon c(f) = R \rbrace$ and $R^ \prime = B_{S^ \prime} $. Then the idempotent members of $R$ and $R^ \prime$ coincide.
\end{theorem}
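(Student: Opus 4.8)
The plan is to work with the tower $R\subseteq B\subseteq R^\prime=B_{S^\prime}$ of (injective) ring maps --- $R\hookrightarrow B$ because $B$ is faithfully flat over $R$, and $B\hookrightarrow R^\prime$ because, as noted above, $S^\prime\subseteq B-Z_{B}(B)$ by McCoy's property --- and to use repeatedly that $R^\prime$ is again a content $R$-algebra by [OR, Theorem 6.2]. One direction of ``coincide'' is free: an element $r\in R$ with $r^{2}=r$ is also an idempotent of $R^\prime$. So I would fix an idempotent $e\in R^\prime$, write $e=f/g$ with $f\in B$ and $g\in S^\prime$, and aim to show $e\in R$.

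\emph{Step 1: $c_{R^\prime}(e)$ is generated by an idempotent $\epsilon$ of $R$.} Put $J=c_{R^\prime}(e)$ and $J^\prime=c_{R^\prime}(1-e)$; both are finitely generated since $R^\prime$ is a content $R$-algebra. From $1=e+(1-e)$, $c_{R^\prime}(1_{R^\prime})=R$ and $c_{R^\prime}(x+y)\subseteq c_{R^\prime}(x)+c_{R^\prime}(y)$ one gets $J+J^\prime=R$; from $e(1-e)=0$ and the Dedekind--Mertens formula in $R^\prime$ one gets $J^{n}J^\prime=0$ for some $n\geq1$, and, interchanging $e$ and $1-e$, $J^{\prime m}J=0$ for some $m\geq1$. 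With $N=\max\{m,n\}$, the ideals $J^{N}$ and $J^{\prime N}$ are comaximal (powers of comaximal ideals are comaximal) with $J^{N}J^{\prime N}=0$; writing $1=x+y$ with $x\in J^{N}$ and $y\in J^{\prime N}$, one has $xy=0$, hence $x^{2}=x$ and $J^{N}=(x)$, so $\epsilon:=x$ is an idempotent of $R$ with $(\epsilon)=J^{N}\subseteq J$ and $1-\epsilon=y\in J^{\prime N}$. Then $J(1-\epsilon)\subseteq JJ^{\prime N}=J^{\prime N}J=0$, so $J=J\epsilon\subseteq(\epsilon)\subseteq J$, i.e. $c_{R^\prime}(e)=(\epsilon)$.

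\emph{Step 2: transport to $B$ and compute $c_{B}(f)$.} Since $c_{R^\prime}\big(e(1-\epsilon)\big)\subseteq c_{R^\prime}(e)\,c_{R^\prime}(1-\epsilon)=(\epsilon)(1-\epsilon)=(0)$, we have $e(1-\epsilon)=0$ in $R^\prime$; clearing the denominator yields some $t\in S^\prime$ with $tf(1-\epsilon)=0$ in $B$, and the Dedekind--Mertens formula in $B$ together with $c_{B}(t)=R$ forces $c_{B}\big(f(1-\epsilon)\big)=(0)$, i.e. $f=f\epsilon$ in $B$. Hence $c_{B}(f)=c_{B}(\epsilon f)=\epsilon\,c_{B}(f)\subseteq(\epsilon)$. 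Conversely, $f=eg$ in $R^\prime$ and $c_{R^\prime}(g)=R$ (as $g$ is a unit of $R^\prime$), so the Dedekind--Mertens formula in $R^\prime$ gives $(\epsilon)=c_{R^\prime}(e)^{n}c_{R^\prime}(g)=c_{R^\prime}(e)^{n-1}c_{R^\prime}(f)$; since $(\epsilon)$ is idempotent this yields $\epsilon\in c_{R^\prime}(f)\subseteq c_{B}(f)$ (the last inclusion being the trivial one, valid for all elements of $B$). Therefore $c_{B}(f)=(\epsilon)$.

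Now the hypothesis on $B$ applies to $f$: there is $f_{1}\in B$ with $f=\epsilon f_{1}$ and $c_{B}(f_{1})=R$. Thus $f_{1}\in S^\prime$, so $u:=f_{1}/g$ is a unit of $R^\prime$, and $e=f/g=\epsilon u$. Then $\epsilon u=e=e^{2}=\epsilon^{2}u^{2}=\epsilon u^{2}$, and multiplying by $u^{-1}$ gives $\epsilon=\epsilon u=e$; hence $e=\epsilon\in R$, which is what we wanted. I expect Step 1 --- identifying the content of an idempotent as an idempotent-generated ideal --- to be the crux; the only other point to be careful about is that nowhere does one need a ``content is unchanged under localization'' statement, only the trivial inclusion $c_{R^\prime}(x)\subseteq c_{B}(x)$ for $x\in B$ and the remark that a unit of $R^\prime$ (in particular any element of $S^\prime$) has content $R$.
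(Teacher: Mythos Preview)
Your argument is correct, but it takes a considerably longer route than the paper's. The paper stays almost entirely inside $B$: from $(f/g)^{2}=f/g$ and the regularity of $g$ it gets $f^{2}=fg$ in $B$; then Dedekind--Mertens with $c(g)=R$ gives $c(f)=c(fg)=c(f^{2})\subseteq c(f)^{2}$, so $c(f)$ is a finitely generated idempotent ideal, hence $c(f)=(\epsilon)$ for an idempotent $\epsilon\in R$ by the classical fact [G1, p.~63]. Writing $f=\epsilon f_{1}$ with $c(f_{1})=R$ and plugging into $f^{2}=fg$ yields $\epsilon f_{1}^{2}=\epsilon f_{1}g$; cancelling the regular element $f_{1}$ gives $\epsilon f_{1}=\epsilon g$, i.e.\ $f=\epsilon g$, whence $f/g=\epsilon\in R$.

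By contrast, your Step~1 rediscovers the ``finitely generated idempotent ideal is generated by an idempotent'' fact in situ, via the pair $J=c_{R'}(e)$, $J'=c_{R'}(1-e)$ and a comaximal-with-zero-product argument; and your Step~2 then has to transport the conclusion back from $R'$ to $B$ before invoking the hypothesis. This is sound --- in particular, your use of $c_{R'}(x)\subseteq c_{B}(x)$ for $x\in B$ and of Dedekind--Mertens in both $B$ and $R'$ is legitimate --- but the single observation $f^{2}=fg$ collapses most of it. What your approach does buy is that it never appeals to the external lemma on idempotent ideals, making the proof self-contained; the paper's approach buys brevity and avoids juggling two content functions.
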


\begin{proof}

Let $f/g$ be an idempotent member of $R^ \prime$, where $f,g \in B$ and $c(g) =R$. Therefore $fg^2 = gf^2$ and since $g$ is a regular member of $B$, we have $fg = f^2$. So $c(f^2) = c(fg) = c(f)$, but $c(f^2) \subseteq c(f)^2$, therefore $c(f)^2 = c(f)$. We know that every finitely generated idempotent ideal of a ring is generated by an idempotent member of the ring [G1, p. 63]. Therefore we can suppose that $c(f) = (e)$ such that $e^2 = e$. On the other side we can find an $f_1 \in B$ such that $f = ef_1$ and $c(f_1) = R$. Consider $ef_1 /g = f/g = f^2/g^2 = e^2f_1^2/g^2$. Since $f_1$ and $g$ are both regular, and $e$ is idempotent, we have $e = ef_1/g = f/g \in R$.
\end{proof}

\begin{corollary}
Let $R$ be a ring and $M$ a commutative, cancellative and torsion-free monoid and put $S^ \prime=\lbrace f \in R[M] : c(f) = R \rbrace$ and $R^ \prime = B_{S^ \prime}$. Then the idempotent members of $R$ and $R^ \prime$ coincide.
\end{corollary}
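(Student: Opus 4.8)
The plan is to derive this corollary immediately from the preceding theorem, once we check that $B = R[M]$ satisfies the hypotheses stated there. Since $M$ is commutative, cancellative and torsion-free, $R[M]$ is a content $R$-algebra by Theorem 13, so the only thing that needs verification is the divisibility condition: if $f \in R[M]$ has $c(f) = (a)$ for some $a \in R$, then there exists $f_1 \in R[M]$ with $f = a f_1$ and $c(f_1) = R$.

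To produce such an $f_1$ I would imitate the device used in the proof of Theorem 32. If $M = \{0\}$ the statement is trivial, for then $R[M] = R$, $S^\prime = U(R)$, and $R^\prime = R$; so assume $M \neq \{0\}$, which forces $M$ to be infinite because it is cancellative and torsion-free. Write $f = \sum_{i=1}^{n} r_i X^{s_i}$ with $s_1, \dots, s_n \in M$ distinct, so that $c(f) = (r_1, \dots, r_n) = (a)$. Then $r_i = a t_i$ for suitable $t_i \in R$, and $a = \sum_{i=1}^{n} u_i r_i$ for suitable $u_i \in R$; putting $d = \sum_{i=1}^{n} u_i t_i$ we get $a = ad$, hence $a(1-d) = 0$. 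Choose $s_{n+1} \in M \setminus \{s_1, \dots, s_n\}$ and set $f_1 = \sum_{i=1}^{n} t_i X^{s_i} + (1-d) X^{s_{n+1}}$. A direct check gives $a f_1 = \sum_{i=1}^{n} (a t_i) X^{s_i} + a(1-d) X^{s_{n+1}} = \sum_{i=1}^{n} r_i X^{s_i} = f$, while $c(f_1)$ contains $\sum_{i=1}^{n} u_i t_i + (1-d) = 1$, so $c(f_1) = R$. This supplies the required hypothesis, and applying the preceding theorem to $B = R[M]$ yields the asserted coincidence of the idempotents of $R$ and $R^\prime$.

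The whole argument is routine bookkeeping; the one point worth a little care is the correction term $(1-d) X^{s_{n+1}}$, which is exactly what forces $c(f_1) = R$ and which relies on the elementary fact that a nontrivial cancellative torsion-free monoid is infinite — precisely the observation exploited in Theorem 32. I therefore anticipate no real obstacle, beyond the mild caution of writing monoid-ring elements with distinct exponents before reading off their content.
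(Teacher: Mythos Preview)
Your proposal is correct and follows the paper's intended route: the corollary is stated without proof, so the paper's argument is simply ``apply the preceding theorem to $B=R[M]$,'' and you do exactly that. The explicit verification of the divisibility hypothesis via the correction term $(1-d)X^{s_{n+1}}$ is precisely the device from Theorem~32, so you are filling in a detail the paper leaves implicit rather than taking a different path.
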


\begin{definition}
 A commutative ring $R$ is said to be a \textit{valuation ring} if for any $a$ and $b$ in $R$ either $a$ divides $b$ or $b$ divides $a$ (\cite[p. 35]{K}).
\end{definition}

\begin{theorem}
 Let $B$ be a content $R$-algebra with the property that if $f \in B$ with $c(f) = (a)$ where $a\in R$, then there exists an $f_1 \in B$ such that $f=af_1$ and $c(f_1) = R$ and put $S^ \prime = \lbrace f \in B \colon c(f) = R \rbrace$ and $R^ \prime = B_{S^ \prime} $. If $R$ is a valuation ring, then so is $R^ \prime = B_{S^ \prime} $.
\end{theorem}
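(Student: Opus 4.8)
The plan is to check the valuation condition of the Definition directly for the ring $R^\prime = B_{S^\prime}$, pushing everything down to a divisibility comparison inside $R$. Recall that $R \subseteq B \subseteq R^\prime$ (since $S^\prime$ consists of regular elements of $B$ by McCoy's property, so the localization is injective), and that every element of $S^\prime$ becomes a unit of $R^\prime$. Hence an arbitrary pair of elements of $R^\prime$ may be written $f/s$ and $h/t$ with $f,h \in B$ and $s,t \in S^\prime$, and since $s,t$ are units of $R^\prime$, $f/s$ divides $h/t$ in $R^\prime$ if and only if $f$ divides $h$ in $R^\prime$. Thus it is enough to show: for any $f,h \in B$, either $f \mid h$ or $h \mid f$ in $R^\prime$. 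The trivial cases $f = 0$ or $h = 0$ are immediate, so assume both are nonzero.

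Next I would use that $B$ is a content $R$-module, so by the Lemma on content modules $c(f)$ and $c(h)$ are finitely generated ideals of $R$. Since $R$ is a valuation ring, every finitely generated ideal of $R$ is principal (an easy induction from the pairwise comparability of divisibility), so write $c(f) = (a)$ and $c(h) = (b)$ with $a,b \in R$. Applying the valuation property of $R$ to $a$ and $b$, and using the symmetry of the roles of $f$ and $h$, we may assume $a \mid b$, say $b = ad$ with $d \in R$.

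Then I would invoke the special hypothesis on $B$: from $c(f) = (a)$ we obtain $f = a f_1$ with $c(f_1) = R$, and from $c(h) = (b)$ we obtain $h = b h_1$ with $c(h_1) = R$. Thus $f_1, h_1 \in S^\prime$, so they are units of $R^\prime$, and in particular $a = f f_1^{-1}$ in $R^\prime$. Computing in $R^\prime$,
$$ h = b h_1 = a d h_1 = (f f_1^{-1}) d h_1 = f \cdot \left( f_1^{-1} d h_1 \right), $$
which exhibits $f \mid h$ in $R^\prime$ (and in the symmetric case $b \mid a$ one gets $h \mid f$ the same way). Hence any two elements of $R^\prime$ are comparable under divisibility, i.e. $R^\prime$ is a valuation ring.

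The argument is largely bookkeeping once the ingredients are assembled; the one genuinely load-bearing observation is the reduction, namely that the hypothesis "$c(f) = (a)$ implies $f = a f_1$ with $c(f_1) = R$" is exactly what turns a divisibility question about $f$ and $h$ in $R^\prime$ into the divisibility question about the content ideals $(a)$ and $(b)$ in $R$, which $R$ being a valuation ring resolves. The main point to be careful about is the localization manipulation — verifying that divisibility in $R^\prime$ depends only on numerators, and that the extracted factors $f_1, h_1$ really are units of $R^\prime$ — together with the degenerate cases where $f$ or $h$ is zero.
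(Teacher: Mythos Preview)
Your proof is correct and follows essentially the same approach as the paper's: both use the special hypothesis to write each element of $R'$ as an element of $R$ times a unit of $R'$, thereby reducing divisibility in $R'$ to divisibility in $R$. The paper's version is considerably terser (it just observes that $f/g = r \cdot (f_1/g)$ with $f_1/g$ a unit and declares the rest ``obvious''), whereas you spell out the comparison of two elements and the degenerate cases explicitly.
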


\begin{proof}

Let $f/g$ be a member of $R^ \prime$, where $f,g \in B$ and $c(g) =R$. Since $c(f)$ is a finitely generated ideal of $R$ and $R$ is a valuation ring, there exists an $r \in R$ such that $c(f) = (r)$ and therefore there exists an $f_1 \in B$ such that $f=rf_1$ and $c(f_1) = R$. By considering this fact that $f_1/g$ is a unit in $R^ \prime$, it is obvious that $R^ \prime$ is also a valuation ring and the proof is complete.
\end{proof}

\section{Acknowledgment} The author wishes to thank Prof. Winfried Bruns for his useful advice.

\end{document}